\documentclass[11pt]{amsart}
\usepackage{amsmath}
\usepackage{amsmath}

\usepackage{latexsym}
\usepackage[matrix,arrow]{xy}
\usepackage{slashed}
\usepackage{parskip}
\usepackage{indentfirst}
\usepackage{tikz-cd} 
\usepackage{amssymb}
\usepackage{color}
\usepackage{marginnote}
\usepackage{extarrows}
\usepackage{hyperref}
\usepackage{backref}
\usepackage{comment}
\excludecomment{removed}
\usepackage{graphicx} 
\newtheorem{theorem}{Theorem}[section]
\newtheorem{lemma}[theorem]{Lemma}

\newtheorem{remark}[theorem]{Remark}

\newtheorem{proposition}[theorem]{Proposition}

\numberwithin{equation}{section}

\newcommand{\C}{\mathbb{C}}
\newcommand{\Z}{\mathbb{Z}}
\newcommand{\R}{\mathbb{R}}

\newcommand{\af}{\mathfrak{a}}

\newcommand{\p}{\mathfrak{p}}
\newcommand{\kf}{\mathfrak{k}}
\newcommand{\e}{\epsilon}

\title
[Invariant
differential operators on  Hermitian symmetric spaces]{
Commutativity of
invariant
differential operators on vector bundles on Hermitian symmetric spaces}
\author{ Robin
van Haastrecht, Genkai Zhang and Yufeng Zhao}
\date{\today}

\address[Robin]{Department of Mathematics, Chalmers University of Technology and the University of Gothenburg, Gothenburg, Sweden}
\email{robinva@chalmers.se}

\address[Zhang]{Department of Mathematics, Chalmers University of Technology and the University of Gothenburg, Gothenburg, Sweden}
\email{genkai@chalmers.se}

\address[Zhao]{Department of Mathematics, Peking University,
  Beijing, China}
\email{Zhaoyufeng@math.pku.edu.cn}

\thanks{The research by
Genkai Zhang was partially
supported
by 
Research by  the Swedish Research Council (Vetenskapsr\aa{}det),
Grant 2022-02861, research
by 
Yufeng Zhao was supported by Chinese Ministry of Education Training Program, Grant No. 20222010.}

\begin{document}

\begin{abstract}
Let $G/K$ be
a Hermitian symmetric space and $V_\tau$ 
 an irreducible representation 
of $K$.
We study
the ring 
$\mathcal D^G(G/K, V_\tau)$
of  $G$-invariant differential operators
    on sections of vector bundles $G\times_{(K, \tau)} V_\tau$
    over  $G/K$
    defined by a
    finite-dimensional
    representation $(V_\tau, \tau)$ of $K$.
    We  classify irreducible representations 
    $(V_\tau, \tau)$ such that
    $\mathcal D^G(G/K, V_\tau)$  is commutative. 
       We 
 construct
 eigenfunctions
     for the differential operators and study the invariance property
    of the eigenvalues 
    under the Weyl group for the restricted real root system of $G$.
    \end{abstract}

\maketitle

\tableofcontents

\section{Introduction}

In the present paper we shall study invariant differential operators on homogeneous vector bundles over Hermitian symmetric domains, their eigenfunctions and  eigenvalues. 
Let $G$ be a
non-compact semisimple 
Hermitian Lie group and $K$ its maximal compact subgroup. Let
$V_\tau$
be a finite-dimensional irreducible 
representation of  $K$ and $C^\infty(G/K,V_{\tau})$ 
be the space of smooth sections 
of the homogeneous vector 
bundle 
$G\times_{(K, \tau)} V_\tau$
defined by $V_\tau$.
The algebraic properties
of 
the ring
$\mathcal D^G(G/K,V_{\tau})$ 
of invariant
differential operators on
$C^\infty(G/K,V_{\tau})$ is of fundamental interest, both in analysis on symmetric spaces 
 and in the study of universal
 enveloping algebras \cite{deitmar, helg, ricc, Sh}.
When $(K, \tau)$
is a character, i.e., a one-dimensional
representation, the ring is commutative, and 
Shimura has constructed
a linear basis 
$\{\mathcal L_{\mu}\}$
consisting of formally positive 
operators
by using the Schmid decomposition
of the symmetric tensor
algebra
$S(\mathfrak p^+)$
under $K$, where 
$\mathfrak p^+$
is the holomorphic tangent space of $G/K$
at $o=K\in G/K$.
For general 
$V_\tau$ it is proved by Deitmar
\cite{deitmar} 
that 
$\mathcal D^G(G/K, V_\tau)$
is commutative
if and only if 
the restriction  $\tau|_M$
to $M$ is multiplicity-free, where $M$
is the centralizer in $K$ of the real Cartan group of $G$. 
In the present paper we shall
classify all such representations
$(K, \tau)$. 
We construct 
 eigenfunctions of invariant
differential operators
using the Szegö transform and
we study the invariance
properties of the eigenvalues
of invariant differential
operators. This question can be posed for any symmetric space, we study the Hermitian case because there are canonical constructions of invariant differential operators related to the Hua-Kostant-Schmid decomposition \cite{Sh} of invariant differential operator. It is an interesting problem to further study the eigenvalue problem for those operators;
see  \cite{SZ}
for the case of one-dimensional representations $V_\tau$ of $K$. 

\subsection{Main
results and methods}

The main result of this paper
is the following theorem; see
below for the exact notation
and definitions.

\begin{theorem} (Theorem \ref{KresThm}, Proposition  \ref{finiterepprop}, Theorem \ref{egvaluethm})
(1) The representation 
$\tau|_M$ is
multiplicity-free if and only if 
$(K, \tau)$ is in the list in Theorem \ref{KresThm}.

(2) Let $(K, \tau)$ be as
in (1) acting on $V_\tau$. For any finite-dimensional irreducible representation
$(W_\lambda, G^{\C})$ there is up to scalar at most one $G$-equivariant map into $C^\infty(G/K, V_\tau)$.

(3)
Let $(K, \tau)
$ be as in (1), $\sigma$
an irreducible representation in  $\tau|_M$
and 
$(W_\lambda, G^{\mathbb C})$ 
as
in (2). Let $D$ 
be an invariant
differential operator
on $C^\infty(G/K, V_\tau)$.
Write   the eigenvalues 
of  $D$ on $W_\lambda$
as $\omega(D)(\sigma_{\lambda},-\lambda|_{\mathfrak{a}}
)$
which can be extended uniquely to a polynomial of $
\lambda|_{\mathfrak{a}}
\in 
(\mathfrak a^{\mathbb C})'
$.
Then the eigenvalue
$\omega(D)$ has following
invariance 
property,
$$\omega(D)(\sigma,\kappa + \rho_{\mathfrak{a}}) = \omega(D)(s \cdot \sigma,s \cdot \kappa + \rho_{\mathfrak{a}}), \ \forall \ \kappa \in 
(\mathfrak a^{\mathbb C})', s\in W.$$
\end{theorem}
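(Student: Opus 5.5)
The theorem has three parts of different nature, and I would prove them separately.

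\emph{Part (1).} This is a branching problem. Since $G/K$ is Hermitian, $M$ is the centralizer in $K$ of $\mathfrak a$, a maximal abelian subspace of $\mathfrak p$. I would use the standard description $M = Z_K(\mathfrak a)$ as the fixed points in $K$ of the strongly orthogonal roots $\gamma_1,\dots,\gamma_r$ (Harish-Chandra), so that $\mathfrak m$ is the centralizer of the $\mathfrak{sl}_2$-triples attached to the $\gamma_j$.
\begin{enumerate}
\item Go through the classical and exceptional Hermitian types one at a time: $SU(p,q)$, $SO^*(2n)$, $Sp(n,\R)$, $SO(2,n)$, $E_{6(-14)}$, $E_{7(-25)}$. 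In each case identify $M$ explicitly. For example, for $SU(p,q)$ with $p\le q$ one has $M \cong U(1)^{p}\times U(q-p)$ up to finite and central factors.
\item Write $\tau$ by its highest weight and apply the known branching rules: Gelfand--Tsetlin type for $U(n)\downarrow U(n-1)$, $SO(n)\downarrow SO(n-1)$, and the $Sp$ cases.
\item Decide for which highest weights the restriction to $M$ is multiplicity-free. The key tool is the criterion that $\tau|_M$ is multiplicity-free iff $(K,M)$ together with $\tau$ forms a ``multiplicity-free pair''. Since $K/M$ is often not spherical, this forces $\tau$ to be essentially one-dimensional on most simple factors of $K$, with small exceptions such as symmetric powers and fundamental representations.
\end{enumerate}
Case-checking via Kr\"amer/Brion type classifications of multiplicity-free restrictions would produce the list of Theorem \ref{KresThm}.

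\emph{Part (2).} I would use Frobenius reciprocity. $G$-equivariant maps $W_\lambda \to C^\infty(G/K,V_\tau)$ correspond to $\mathrm{Hom}_K(W_\lambda,V_\tau)$. By a Cartan--Helgason type argument, $W_\lambda$ is generated under $K$ by the $P=MAN$-highest weight vectors; concretely, the $M$-module of $N$-invariants $W_\lambda^{N}$ is irreducible. The Iwasawa decomposition then gives an injection
\[
\mathrm{Hom}_K(W_\lambda,V_\tau)\hookrightarrow \mathrm{Hom}_M(W_\lambda^N,V_\tau).
\]
The right-hand side has dimension at most one because $\tau|_M$ is multiplicity-free. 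This realizes $W_\lambda$ as a subrepresentation of the induced principal series $\mathrm{Ind}_P^G(\sigma_\lambda\otimes e^{-\lambda|_{\mathfrak a}})$, and the Szeg\"o/Poisson transform maps it into sections of $V_\tau$.

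\emph{Part (3).} I would proceed in three steps.
\begin{enumerate}
\item Define $\omega(D)(\sigma,\nu)$ for general $\nu$ as the scalar by which $D$ acts on the image of the Poisson transform $\mathcal P_{\sigma,\nu}$ from $\mathrm{Ind}_P^G(\sigma\otimes e^{\nu}\otimes 1)$. Equivalently, it is the image of $D$ under the Harish-Chandra homomorphism $\mathcal D^G(G/K,V_\tau)\cong (U(\mathfrak g)\otimes \mathrm{End}V_\tau)^K/\dots \to U(\mathfrak a)\otimes \mathrm{End}_M(V_\tau)$, evaluated on the $\sigma$-isotypic block. This is manifestly polynomial in $\nu$.
\item The values at $\nu=-\lambda|_{\mathfrak a}$ for highest weights $\lambda$ form a Zariski-dense set, which gives the unique polynomial extension.
\item For invariance, intertwining operators $A(s):\mathrm{Ind}(\sigma,\nu)\to\mathrm{Ind}(s\sigma,s\nu)$ commute with $G$ and hence with $D$. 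One needs that $\mathcal P_{s\sigma,s\nu}\circ A(s)$ is a nonzero multiple of $\mathcal P_{\sigma,\nu}$ for generic $\nu$. This is where multiplicity one from (2) is used: generically the image is the unique copy determined by its $K$-type $\tau$, so both Poisson transforms have the same image and the eigenvalues coincide. By polynomiality the identity then extends to all $\kappa$.
\end{enumerate}
The $\rho_{\mathfrak a}$ shift comes from the normalization of the induction.

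\emph{Main obstacle.} I expect the hardest step to be verifying, across the case analysis of (1), that the branching is multiplicity-free exactly on the stated list, especially for the exceptional groups. There I would try computing $\tau|_M$ through an intermediate subgroup $K\supset L\supset M$ with known branching.
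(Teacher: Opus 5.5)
Your arguments for (2) and (3) go through. For (1), your strategy (identify $M$ type by type and apply branching rules) is the paper's. However, the classification, where almost all the work lies, is only announced, and the tools you point to would not produce the list.

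The ``key criterion'' you state is just a restatement of the question. Sphericity of $K/M$ (Kr\"amer, Brion) is the wrong notion: it concerns the trivial $M$-type, or all $K$-types at once, and non-sphericity places no constraint on an individual $\tau$.

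More seriously, analysing $M$ ``up to finite and central factors'', or only through $\mathfrak m$, discards exactly what decides several cases.
\begin{itemize}
\item For $Sp(n,\mathbb R)$ one has $\mathfrak m=0$ and $M\cong\mathbb Z_2^n$. After Howe's classification reduces to weight-multiplicity-free $\tau$, the paper separates $\wedge^j\mathbb C^n$ from $S^m\mathbb C^n$. In $\wedge^j\mathbb C^n$ the weight vectors give distinct characters of $\mathbb Z_2^n$. In $S^m\mathbb C^n$, e.g.\ $x_1^m$ and $x_2^m$ give the same character for $m$ even.
\item For $Spin_0(2k+1,2)$, the spin representation restricts to $Spin(2k-1)$ as two copies of the same representation. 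It is on the list only because the $\mathbb Z_4$ in $M=Spin(n-2)\times\mathbb Z_4/\mathbb Z_2$ acts by $i$ and $-i$ on the two copies.
\end{itemize}
So you must determine $M$ as a group in each case. The paper uses Peirce decompositions, and triality to get $M=Spin(8)$ for $E_{7(-25)}$. You then have to combine Gelfand--Tsetlin interlacing, Howe's classification and Stembridge's classification with explicit pairs of vectors of the same $M$-type, e.g.\ the two $M$-fixed vectors $e_1^{\otimes n}, e_3^{\otimes n}\in V_{n\omega_1}$ for $K=E_6U(1)$. Stembridge covers only Levi subgroups, so a non-Levi $M$ such as $SU(2)^r\subset U(2r)$ needs extra arguments. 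None of this is in your outline.

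For (2) you give the paper's argument in dual form. From $G=KAN$ you get $W_\lambda=\mathrm{span}\,K\cdot W_\lambda^N$, hence $\mathrm{Hom}_K(W_\lambda,V_\tau)\hookrightarrow\mathrm{Hom}_M(W_\lambda^N,V_\tau)$. The paper instead embeds $W_\lambda$ in $I^0_{\sigma,-\lambda|_{\mathfrak a}-\rho_{\mathfrak a}}$ and uses $[V_\tau:I_{\sigma,\nu}|_K]=[V_\tau|_M:\sigma]$. Your version avoids $J_\lambda$.

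For the invariance in (3) your route is genuinely different. The paper gets the scalar from the explicit formula $\nabla(D)\Psi_{\tau,\lambda}v=\Psi_{\tau,\lambda}\,\Omega_{\tau,-\lambda+\rho_{\mathfrak a}}(D)v$. It then quotes Lepowsky's Theorem 9.8(2), an algebraic statement about the image of $\Omega_\tau$. You instead use that $\mathrm{Hom}_G(I_{\sigma,\nu},C^\infty(G/K,V_\tau))$ is at most one-dimensional because $\tau$ occurs once in $I_{\sigma,\nu}|_K$. This, not (2), is the multiplicity one you actually need. It gives $\mathcal P_{s\sigma,s\nu}\circ A(s)=c\,\mathcal P_{\sigma,\nu}$.

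You should say why $c\neq 0$ for generic $\nu$:
\begin{itemize}
\item $\mathcal P_{s\sigma,s\nu}$ is nonzero on the $\tau$-type for every $\nu$, by Schur orthogonality as in the proof of Proposition \ref{finiterepprop}.
\item For generic $\nu$, $I_{\sigma,\nu}$ is irreducible, so the nonzero operator $A(s)$ is injective and does not kill the $\tau$-type.
\end{itemize}
Polynomiality then extends the identity to all $\nu$.

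Your argument is more self-contained and shows exactly where multiplicity-freeness enters. The price is analytic input: meromorphic continuation and generic nonvanishing of Knapp--Stein operators, and generic irreducibility of principal series. Lepowsky's theorem is purely algebraic and yields the $W$-invariance for the whole image of $\Omega_\tau$ at once.
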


The proof of the 
classification
is through a case by case
computation
and is technically involved.
We shall repeatedly use the classical branching rules in
\cite{good},
the classifications
of weight-free
representations
in \cite{Howe}
and multiplicity-free results in
\cite{St}.
The proof 
for the uniqueness
of $F: W_\lambda \to 
C^\infty(G/K, V_\tau)
$  is somewhat
natural,
it is done through
the realization
$J: W_\lambda 
\to Ind_{MAN}^G(\sigma\otimes \alpha\otimes 1)$
of finite-dimensional
representations $W_\lambda$
in the induced representation
in $Ind_{MAN}^G(\sigma\otimes \alpha\otimes 1)$,
a factorization
of $F=SJ$
as a product of $J$
and the Poisson-Szeg\"o{}
transform, and  the multiplicity-free property of $V_\tau$
under $M$. The result
on the invariance
of eigenvalues
of  differential
operators is 
a consequence of a
general result of  Lepowsky \cite{L}
and the factorization.

\subsection{Related
results and questions}

When  $ \tau$ is a character Shimura \cite{Sh} 
has constructed
a system of $r$ generators
for the ring $\mathcal D^G(G/K, V_\tau)$ 
and proposed several
questions on determining
the domains of positivity
for the eigenvalues
of the generators
and of the 
whole linear basis
$\{\mathcal L_{\mu}\}$
of formally positive
operators. A partial
answer to these
questions has been obtained
in \cite{SZ, Zhang}. Here invariance of eigenvalues under the Weyl group was used. 
We may also pose similar
questions as above for $(K, \tau)$
in the list of our classification.  
This is related to the antihomomorphism $\Omega:U(\mathfrak{g})^{K}\rightarrow  U( \mathfrak a^{\mathbb C}) \otimes U(\mathfrak k^{\mathbb C})^{M}$ 
in Section \ref{sect-4}; see \cite{ACT1, ACT2}
for the case
of real rank 
one groups.

The computation
of eigenvalues
of invariant 
differential
operators is closely
related to the 
problem of  characterizing
finite-dimensional representations
of $G$ containing
a fixed $K$-type $\tau$.
When $\tau$
is a character there
is the Cartan-Helgason
theorem and its generalization
by Schlichtkrull \cite{schlichtkrull}. On
the analytic side there is the explicit Plancherel formula for
the $L^2(G/K, V_\tau)$
by Shimeno \cite{Shimeno}.
It might be possible
to find a characterization
of the finite-dimensional
$G$-representations
containing $(\tau, K)$
and to find
the discrete
series $(\pi, G)$
contained in 
$L^2(G/K, V_\tau)$,
in this case 
they appear with multiplicity
at most one (see e.g. \cite[Prop. 2.2, (40)]{Campo},
\cite{ricc} or \cite[Vol. II, Proposition 6.1.1.6]{war}).

\subsection{List of notation and
symbols}

\begin{enumerate}
\item $G/K$: Hermitian
symmetric space.

\item 
$
\mathfrak g=\mathfrak
k +\mathfrak p$,
$\mathfrak g^{\mathbb C}=
\mathfrak p^{-}
+\mathfrak
k^{\mathbb C}
+\mathfrak p^+
$: Cartan and
Harish-Chandra decompositions of the Lie algebra $\mathfrak g$ of $G$ and its
complexification $
\mathfrak g^{\mathbb C}$.

\item $\mathfrak t^{\mathbb C}
=\mathfrak
(t^-)^{\mathbb C} 
\oplus 
\mathfrak (t^+)^{\mathbb C} 
\subset 
\mathfrak
k^{\mathbb C}$:
Cartan
subalgebra of $\kf^{\mathbb C}$, and
$\gamma_r>\cdots >\gamma_1$ are the
Harish-Chandra strongly
ortogonal roots
with $\gamma_j|_{
\mathfrak t_{\mathbb C}^+}=0 $ and 
 $\gamma_r$ being
 the highest non-compact
 root.

\item 
 $\mathfrak a
 \subset\mathfrak p$:
 maximal abelian 
subspace (Cartan subspace) of $\mathfrak p$;
$c: \mathfrak a^{\mathbb C}
\to \mathfrak (t^{-})^{\mathbb C}
$ is then the Cayley transform and
$\alpha_j = \gamma_j\circ c$ is the
Cayley transform
of the Harish-Chandra
orthogonal roots $\gamma_j$
to $\mathfrak a^{\mathbb C}$. Furthermore,
$M\subset K, M'\subset K$, $W=W(\mathfrak g, \mathfrak a)$ are respectively the centralizer
and normalizer of $\mathfrak a$
in $K$, and the Weyl group. Also, $\mathfrak h^{\mathbb C}=
\mathfrak a^{\mathbb C}
+\mathfrak (t^{+})^{\mathbb C}$.

\item 
$(W_{\lambda}, G^{\C}),
(V_\tau, K),
(U_\sigma, M)$:
 Finite-dimensional
irreducible
representations
of the respective
groups, the highest
weight $\lambda$
being defined
on the Cartan subalgebra $\mathfrak{h}^{\C}$ of $\mathfrak{g}^{\C}$, and
highest weight of
$\tau$ being defined
on the Cartan subalgebra $\mathfrak{t}^{\C}$.

\item $G=NAK$, 
$g=n(g)e^{H(g)}k(g)$: the Iwasawa decomposition 
of $G$ with $n(g)\in N, H(g)\in \af, k(g)\in K$ and $g \in G$.

\item $P=MAN$:
minimal 
parabolic subgroup of $G$.

\item $I_{\sigma, \nu}=Ind_{P}^G(\sigma\otimes e^{\nu}\otimes 1)$
: normalized
induced representation
of $G$ from $P$,
the corresponding
representation 
of $\mathfrak g^{\mathbb C}$ on the subspace of $K$-finite vectors will
also be written as
$I_{\sigma, \nu}^0$.

\item $C^\infty(G/K,V_{\tau})$: 
space of
smooth sections
of the vector bundle
$G\times_K V_\tau$
over $G/K$
defined by
the representation $(K, \tau)$. We let $\mathcal D^G(G/K,V_{\tau})$ then be the ring 
of $G$-invariant
differential operators on 
$C^\infty(G/K,V_{\tau})$.

\end{enumerate}

\subsection*{Acknowledgments}
We would like to thank Pavle Pandzic for informing 
us that some of our results
on the classifications
of $(V_\tau, \tau)$
have also been obtained
earlier by his joint work with 
Soo-Teck Lee and for some further discussions.

\section{Preliminaries}

We present
some known
facts on Hermitian symmetric spaces $G/K$, induced
representations of $G$, finite-dimensional
representations
of $G$ as subrepresentations
of induced representations, and
commutativity of
invariant
differential
operators on vector
bundles over $G/K$.

\subsection{Hermitian symmetric spaces}

Let $(G, K)$ be an
irreducible Hermitian symmetric pair with $G$ being a connected real simple noncompact Lie group and $K$
its maximal compact subgroup.
Let $\mathfrak{g}$
be its Lie algebra. Assume $G\subseteq G^{\mathbb{C}}$, where $G^{\mathbb{C}}$ is a connected complex Lie group with Lie algebra $\mathfrak{g}^{\mathbb{C}}
=\mathfrak{g}
+ i\mathfrak{g}$, the complexified 
Lie algebra of 
$\mathfrak{g}$. Let 
$\mathfrak{g}=\mathfrak{k}+\mathfrak{p}$ be a Cartan decomposition
with $\mathfrak p$
being identified
with the
real tangent
space $T_o(G/K)$
at $o=K\in G/K$.

Let $\mathfrak t$ be a Cartan subalgebra of $\mathfrak{k}$, then $\mathfrak t$ is also a Cartan subalgerba of $\mathfrak{g}$. Let
$\triangle \subseteq i\mathfrak t'$ consist of the roots of $\mathfrak t$ in $\mathfrak{g}$, and let $\mathfrak{g}_{\gamma}\subseteq \mathfrak{g}^{\mathbb{C}} $, $\gamma\in \triangle$,
denote the
corresponding  root space. 
Let $\triangle_c$, respectively $\triangle_n$ be the set of compact, respectively noncompact roots, i.e. those roots $\gamma$ for which $\mathfrak{g}_{\gamma}\subseteq \mathfrak{t}_{\mathbb{C}}$ respectively 
$\mathfrak{g}_{\gamma}\subseteq \mathfrak{p}_{\mathbb{C}}$. Then $\triangle=\triangle_c\cup \triangle_n$.

We fix  $Z$
in the center $ \mathfrak{z}$ 
of $\mathfrak{k}$ 
such that $\gamma(Z)=\pm i$ for all $\gamma\in  \triangle_n$. 
Then $\mathfrak k
=\mathbb R Z 
\oplus
\mathfrak k_1$,
where $\mathfrak k_1=[\mathfrak k, \mathfrak k]$ is 
an ideal of $\mathfrak k$.
Let
$\mathfrak k^{\mathbb C}
=
\mathfrak p^-
+
\mathfrak k^{\mathbb C}+
\mathfrak p^+
$ be the corresponding
Harish-Chandra
decomposition, so
that $\mathfrak p^+$
is the space of
positive root
vectors
and is identified
as the 
space 
$T^{(1, 0)}_o(G/K)$
of holomorphic
tangent vectors
of $o=K\in G/K$.
Choose an ordering of $\triangle$ such that $$\triangle_{n}^{+}=\{\gamma\in \triangle \ | \ \gamma(Z)=i\}=\triangle_{n}^{+}=\triangle^{+}\cap\triangle_{n}.$$
Let 
$\rho $
be the half sum of positive roots
in $\triangle^+$.

For $\phi \in (\mathfrak{t}^{\mathbb{C}})'$, let $H_{\phi}\in \mathfrak{t}^{\mathbb{C}} $ be defined by $\phi(H)=(H_{\phi},H)$ for all $H \in \mathfrak{t}$, where $(\cdot,\cdot)$ denotes the Killing form. Let $\{\gamma_1,\cdots,\gamma_r\}\subseteq \triangle_n$
 be the Harish-Chandra strongly orthogonal roots
 such that $\gamma_r$
 is the highest root,  $r$ being the (real) rank of $G$. Let $$\mathfrak
t^{-}=\sum\limits_{j=1}^{r}\mathbb{R}iH_{\gamma_j}$$
and $$
\mathfrak t^{+}=\{H \in t \ |  \ \gamma_j(H)=0, j=1, \cdots, r\}$$
Then $
\mathfrak t=
\mathfrak t^{+}+
\mathfrak t^{-}$.

\subsection{
Cayley transform between Cartan subalgebras}

For each non-compact root
$\gamma$ we choose
a root vector $0 \neq X_{\gamma}\in \mathfrak{g}_{\gamma}$ subject to $\gamma([X_{\gamma}, X_{-\gamma}])=2$, $\overline{X_{\gamma}}
={X_{-\gamma}}$
where $X\to \overline X$ is
the conjugation with
respect to $\mathfrak g$.
Let 
$$\mathfrak{a}=\sum\limits_{j=1}^{r}\mathbb{R}(X_{\gamma_j}+X_{-\gamma_j})$$then $\mathfrak{a}$ is a maximal abelian subspace of $\mathfrak{p}$. 
Let $\triangle(\mathfrak g, \mathfrak a)^+$
be the set of positive
roots of $\mathfrak a$ in
$\mathfrak g$
and $\mathfrak n$ the
subspace of positive root vectors. Let $\mathfrak a \oplus \mathfrak m$
be the centralizer
of $\mathfrak a$ in $\mathfrak g$. 
Then 
$$
\mathfrak t^+\subset \mathfrak m^{\mathbb C}
$$
is a Cartan subalgebra of $\mathfrak m^{\mathbb C}$
and let $\rho_{\mathfrak m^{\mathbb C}, \mathfrak t^+}$
be the half sum of positive roots
of $\mathfrak t^+$
in $m^{\mathbb C}$,
the positivity
being the same as
in $\Delta^+$.
Let $M=Z_K(\mathfrak a)$ be the centralizer of $\mathfrak a$ in $K$, and $A, N$
the corresponding
connected
subgroups with Lie algebras $\mathfrak a$, $\mathfrak n$ respectively. Let
$W=W(\mathfrak g, \mathfrak a)
=M'/M$ be the corresponding
Weyl group. Here $M'
=N_K(\mathfrak a)$
is the normalizer of $\mathfrak a$ in $K$.

Let $c$ be the automorphism of $\mathfrak{g}^{\mathbb{C}}$
given by $$c=\mbox{Ad}
(\mbox{exp}\frac{\pi}{4}\sum\limits_{j=1}^{r}(X_{\gamma_j}-X_{-\gamma_j})).$$
Then $c$ maps
$\mathfrak{a}$ bijectively to $i\mathfrak t^{-}$,
$$
c(X_{\gamma_j} +X_{-\gamma_j})
= H_{\gamma_j}
$$
and 
is the identity
map on $\mathfrak t^{+}$;
see e.g. \cite[Proposition 10.6]{loos}.
We define another
Cartan subalgebra
$\mathfrak h^{\mathbb C}
=\mathfrak a^{\mathbb C}
\oplus
\mathfrak (t^+)^{\mathbb C}
$
of $\mathfrak g^{\mathbb C}$ and
the linear functionals
\begin{equation}
\alpha_j=
\gamma_j\cdot c|_{\mathfrak a},\,
\rho_{\mathfrak a}= \rho|_{c^{-1}(\mathfrak a)},\,
\rho_{\mathfrak t^{+}}= \rho|_{
    \mathfrak t^{+}}.
\end{equation}
In particular we have $\alpha_i(X_{\gamma_j}
+X_{-\gamma_j})=
\gamma_i(H_{\gamma_j}
)=2\delta_{ij}$, $1\le i, j\le r$
and $\rho_{\mathfrak a}$
is the half-sum of positive
real roots of $
{\mathfrak a}$ in $\mathfrak g$.

\subsection{Induced
representations
from the minimal parabolic
subgroup $MAN$
}

For an irreducible unitary representation $(\sigma, W_{\sigma})$ of $M$, and a linear form $\nu \in (\mathfrak{a}^{\mathbb C})'$ the induced representation
$\pi_{\sigma,\nu}
=Ind_{MAN}^G(\sigma\otimes e^{\nu}\otimes 1)
$ is defined on the space
(using the same notation)
$$I_{\sigma,\nu}:
=\{f:G\rightarrow W_{\sigma} \ | \ f(xman)=e^{-(\nu+\rho_{\mathfrak a})(H(a))}\sigma^{-1}(m)f(x), x \in G,$$
$$ man = m e^{H(a)}n\in MAN; f|_K\in L^2(K)\}.$$
The group $G$ acts on 
$I_{\sigma, \nu}$
by the regular representation
$$
\pi_{\sigma, \nu}(h) f(g)
=
f(h^{-1}g), f \in
I_{\sigma,\nu},
$$
and there is a $K$-invariant
inner product
$$(f_1, f_2)=\int_{K}(f_1(k),f_2(k))_{\sigma} dk, f_1, f_2\in I_{\sigma,\nu} $$where $dk$ is the normalized Haar measure on $K$ and $(\cdot,\cdot)_{\sigma}$ the $M$-invariant inner product on $W_{\sigma}$. 
When $\nu$ is 
unitary, i.e., $\nu\in i\mathfrak a'$
then this is a
unitary representation of $G$, 
also called the unitary principal series.
We denote
$I_{\sigma,\nu}^{0}$ the  space of $K$-finite vectors for $I_{\sigma,\nu}$. 
We recall \cite[Proposition 8.22, p. 225]{knapp} that
the infinitesimal
character of
$I_{\sigma, \nu}$
is $(\sigma +\rho_{\mathfrak m^{\mathbb C}, \mathfrak t^+})
\oplus  \nu,
$
on $\mathfrak t^+
\oplus \mathfrak a^{\mathbb C}$,
where $\sigma$ is identified with its highest weight,

As a $K$-module
$I_{\sigma,\nu}^{0}$ is a direct sum
$$I_{\sigma,\nu}^{0}\cong \oplus_{\tau \in \hat{K}}\mbox{Hom}_{M}(\tau|_{M}, \sigma)\otimes V_{\tau}.$$
The isomorphism 
can be  given 
explicitly 
as follows. For
$T\in \mbox{Hom}_{M}(\tau|_{M}, \sigma)$ and $v\in V_{\tau}$, we define an element 
$f_{\tau, T, v}\in I_{\sigma,\nu}^{0}$ by 
$$f_{\tau, T, v}(kan)=T(\tau(k)^{-1}v)e^{-(\nu+\rho_{\mathfrak a})(H(a))}$$This assignment $T\otimes v \rightarrow f_{\tau, T, v}$ 
induces the isomorphism.

\subsection{
Realization
of irreducible
finite-dimensional
representations
$(W_\lambda, 
G^{\C})$
in the induced
representation
$(
I_{\sigma, \nu}^0, 
\mathfrak g^{\C})$.
}

Let $(W_{\lambda}, G^{\C})$ be an irreducible finite-dimensional representation of $\mathfrak{g}^{\C}$ with highest weight $\lambda$ on $\mathfrak{h}^{\C}$. Let $W_{\lambda}^{\overline{N}}$ be the
subspace of $\overline{N}$-invariant vectors. It is
always non-zero and forms  an irreducible $M$-representation which we denote by $\sigma_{\lambda}$. Then there is an embedding
\begin{equation}
\label{Jlambda}
J_{\lambda}: W_{\lambda} \rightarrow I_{\sigma_{\lambda}, -\lambda|_{\mathfrak{a}} - \rho_{\mathfrak{a}}}^0,
\end{equation}
see e.~g. \cite[Lemma 8.5.7]{Wall} and \cite[Theorem 3.1]{Y}. Identifying $W_{\lambda}^{\overline{N}}$ with $(W_{\lambda}'^{N})'$, where $W_{\lambda}'^{N}$ are the $N$-fixed vectors in $W_{\lambda}'$, the map is given by
$$(J_{\lambda}(w)(g))(v) = v(\pi_{\lambda}(g^{-1})w)$$
for $v \in W'^{N}$.

\subsection{The algebra of invariant differential operators on homogeneous vector bundles}

Let $\tau$ be an 
irreducible finite-dimensional
representation of $K$ on a complex vector space $V_\tau$. Let
$C^{\infty}(G, \tau)$ 
be the space 
of $V_\tau$-valued
smooth functions on $G$.
Consider the vector
bundle $G\times_K V_\tau$ over  $G/K$ 
defined by  $\tau$, where $K$ acts on $G\times V_\tau$
by $(g, v)\mapsto (gk^{-1}, \tau(k)v)$. The space of $C^\infty$-sections of the 
bundle is realized
as the subspace
$C^{\infty}(G/K, {\tau})
\subset 
C^{\infty}(G, \tau)$ 
of $f\in 
C^{\infty}(G, \tau)$ such that
$$f(g k)=\tau(k)^{-1}f(g), \ \forall g \in G, k \in K.$$

Denote by $\mathcal{D}^G(G/K,V_{\tau})$ the space of $G$-invariant
differential operators of $C^\infty(G/K,V_{\tau})$. This will be the main object of our study.
We recall the following result 
\cite[Theorem 3]{deitmar}.

\begin{proposition}
\label{comm-mult-one}
   The  ring   $\mathcal{D}^G(G/K,V_{\tau})$
   is commutative if
   and only if 
   the restriction 
   $V_\tau|_M$
   is multiplicity-free.
\end{proposition}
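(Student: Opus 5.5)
The plan is to identify $\mathcal D^G(G/K,V_\tau)$ algebraically and reduce commutativity to a statement about $M$-multiplicities via the principal series. First I will use the standard isomorphism
$$\mathcal D^G(G/K,V_\tau)\cong \big(U(\mathfrak g^{\C})\otimes \mathrm{End}(V_\tau)\big)^K\big/\big(U(\mathfrak g^{\C})\otimes \mathrm{End}(V_\tau)\big)^K\cap \big(U(\mathfrak g^{\C})\mathfrak k^{\C}\big)$$
(with the appropriate twist by $\tau$). The same algebra can also be written as $U(\mathfrak g^{\C})^K/(U(\mathfrak g^{\C})^K\cap U(\mathfrak g^{\C})I_\tau)$ when tensored suitably. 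Any $D\in\mathcal D^G(G/K,V_\tau)$ then acts on $\mathrm{Hom}_K(V_\tau, I^0_{\sigma,\nu})\cong \mathrm{Hom}_M(V_\tau|_M,\sigma)$, by the Frobenius isomorphism recalled in the subsection on induced representations. The action is via the Poisson–Szegö transform $S_{\sigma,\nu}: I_{\sigma,\nu}\to C^\infty(G/K,V_\tau)$, $S(f)(g)=\int_K \tau(k)\,T^*f(gk)\,dk$.

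For the direction "multiplicity-free $\Rightarrow$ commutative", I will show that $D$ acts on the space $\mathrm{Hom}_M(V_\tau|_M,\sigma)$ for every $(\sigma,\nu)$. That space has dimension $\le 1$, so $D$ acts by a scalar $\omega(D)(\sigma,\nu)$. This gives a homomorphism $D\mapsto (\omega(D)(\sigma,\nu))_{\sigma,\nu}$ into a commutative ring. It remains to prove injectivity: if $D$ annihilates the images of all Poisson transforms, then $D=0$. I plan to obtain this from the fact that for generic $\nu$ the images of $S_{\sigma,\nu}$, over all $\sigma\subset\tau|_M$, are jointly dense enough to detect a differential operator. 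Equivalently, I can use the Harish-Chandra-type homomorphism $\Omega: U(\mathfrak g)^K\to U(\mathfrak a^{\C})\otimes U(\mathfrak k^{\C})^M$ (via $U(\mathfrak g)=U(\mathfrak n)\otimes U(\mathfrak a)\otimes U(\mathfrak k)$). This identifies $\mathcal D^G(G/K,V_\tau)$ with a subalgebra of $S(\mathfrak a^{\C})\otimes \mathrm{End}_M(V_\tau)$. When $V_\tau|_M$ is multiplicity-free, $\mathrm{End}_M(V_\tau)\cong\C^{k}$ is commutative, hence so is the image.

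For the converse, suppose $\sigma$ occurs with multiplicity $m\ge2$ in $\tau|_M$. Then $\mathcal D^G$ acts on $\mathrm{Hom}_M(V_\tau,\sigma)\cong\C^m$. Taking $\nu$ far out, I will use the principal symbol and a Kostant–Rallis type argument to show that the image in $\mathrm{End}(\C^m)$ is all of $\mathrm{End}(\C^m)$: the leading terms of $\Omega$ produce all of $\mathrm{End}_M(V_\tau)\otimes S(\mathfrak a)^{W\text{-type}}$ modulo lower order. Since $\mathrm{End}(\C^m)$ is noncommutative, $\mathcal D^G$ is noncommutative.

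The main obstacle will be the injectivity and surjectivity of this "symbol" map onto $(S(\mathfrak a^{\C})\otimes\mathrm{End}_M(V_\tau))$ at leading order. The key input here is that $\mathrm{gr}\,\mathcal D^G(G/K,V_\tau)\cong (S(\mathfrak p)\otimes\mathrm{End}(V_\tau))^K$, together with the restriction to $\mathfrak a$, which maps this onto $(S(\mathfrak a)\otimes\mathrm{End}_M(V_\tau))^{M'}$-type invariants and is injective since $K\cdot\mathfrak a=\mathfrak p$. I will first establish this graded statement and then lift it to the filtered algebra.
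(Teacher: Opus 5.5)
The paper does not prove this proposition; it quotes it from Deitmar \cite[Theorem 3]{deitmar}. Your proposal is therefore an independent reconstruction, and in substance it is correct.

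\textbf{Forward direction.} This runs on the machinery the paper introduces only later, in Section \ref{sect-4}: the projection $\Omega$, the homomorphism $\Omega_\tau$ into $U(\mathfrak a^{\C})\otimes\mathrm{End}_M V_\tau$, and Lemma \ref{egvalue}. The paper uses multiplicity-freeness there only to get scalar eigenvalues. You add the ingredient that actually gives commutativity, namely injectivity of $\Omega_\tau$ on $\mathcal D^G(G/K,V_\tau)$. The part of $\Omega_\tau(D)$ of top degree in $\mathfrak a$ is the restriction to $\mathfrak a$ of the principal symbol of $D$ in $(S(\mathfrak p^{\C})\otimes\mathrm{End}V_\tau)^K$. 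A $K$-equivariant polynomial map vanishing on $\mathfrak a$ vanishes on $\mathrm{Ad}(K)\mathfrak a=\mathfrak p$. The citation buys the paper brevity; your route is self-contained and shows directly that the eigenvalues $\omega(D)(\sigma,\cdot)$ of Theorem \ref{egvaluethm} determine $D$.

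\textbf{A false intermediate claim.} Restriction to $\mathfrak a$ is \emph{not} onto $(S(\mathfrak a^{\C})\otimes\mathrm{End}_M V_\tau)^W$ in general, and the phrase ``$S(\mathfrak a)^{W\text{-type}}$'' should go. Already in degree $0$ the source is $\mathrm{End}_K(V_\tau)=\C$. The target in degree $0$ is $\mathrm{End}_M(V_\tau)^W$, which is strictly larger as soon as the constituents of $\tau|_M$ form more than one $W$-orbit. An example is $G=SU(2,1)$ with $\tau$ the standard representation of $U(2)$. The nontrivial Weyl element is represented by the central element $(-I_2,1)$ of $K$, and $\tau|_M$ is a sum of two distinct characters.

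\textbf{What the converse actually needs.} Only a pointwise statement: for regular $H\in\mathfrak a$, the map $\phi\mapsto\phi(H)$ sends $(S(\mathfrak p^{\C})\otimes\mathrm{End}V_\tau)^K$ onto $\mathrm{End}_M(V_\tau)$. This holds for three reasons.
\begin{enumerate}
\item $K\cdot H\cong K/M$.
\item Polynomials restrict onto the $K$-finite functions on this orbit, by Stone--Weierstrass, or by Kostant--Rallis for the closed orbit $K^{\C}H$ with stabilizer $M^{\C}$.
\item For $A\in\mathrm{End}_M(V_\tau)$, averaging over $K$ a polynomial extension of $kM\mapsto\tau(k)A\tau(k)^{-1}$ makes it equivariant.
\end{enumerate}

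With this you do not need to send $\nu$ to infinity. The values at $H$ of homogeneous equivariant elements span $\mathrm{End}_M(V_\tau)\cong\bigoplus_\sigma M_{m_\sigma}(\C)$, which is noncommutative when some $m_\sigma\ge 2$. So two such values fail to commute, and $\mathrm{gr}\,\mathcal D^G$ is noncommutative. A filtered algebra with noncommutative associated graded is itself noncommutative, so $\mathcal D^G$ is noncommutative.

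\textbf{A shorter converse.} For generic $\nu$, $I_{\sigma,\nu}$ is irreducible. Then $\mathcal D^G$, through $U(\mathfrak g^{\C})^K$, acts irreducibly on $\mathrm{Hom}_K(V_\tau,I^0_{\sigma,\nu})$, a space of dimension $[\tau|_M:\sigma]$. A commutative algebra cannot act irreducibly on a space of dimension greater than one.
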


We recall some basic known
facts about 
the relation between 
the universal algebra $U(\mathfrak{g}^{\C})$
and invariant differential operators.
The Lie algebra $\mathfrak g^{\C}$ acts on $C^{\infty}(G, V_{\tau})$ by extending
differentiation
of the  right regular action,
 \begin{equation}
  \label{nabla-Xequation}
 \nabla f(X) (g)=
 \nabla_X f (g):= 
 \frac{d}{dt}
 f(g
 \exp(tX))|_{t=0},  \ \forall \  X \in \mathfrak{g},
 \end{equation}
  to $X \in \mathfrak{g}^{\C}$. 
  Note
 that $X\to \nabla_X $ is a Lie algebra
 anti-homomorphism between $\mathfrak g^{\C}$ and the Lie algebra of vector fields 
 on $G$,
 $$
\nabla_{[X, Y]}= -[\nabla_X, 
\nabla_Y]. 
 $$
It  can
be further extended
to  $U(\mathfrak{g}^{\C})\otimes \mbox{End}V_{\tau}$ 
by
 $$
 \nabla_{\sum_i X_i\otimes T_i}f(g)
=\sum_i T_i(\nabla_{X_i}f(g)).$$
Restricting to the
$K$-invariant elements we have
\begin{flalign}
\label{diffopUnAlg}
 \nabla:  (U(\mathfrak{g}^{\C}) \otimes \mbox{End}V_{\tau})^{K} \rightarrow \mathcal{D}^G(G/K,V_{\tau}).
\end{flalign}
This map is surjective, see e. g. \cite{Ya} and \cite[Section 2]{Sh}.

\section{
Commutativity
of $\mathcal D^G(G/K,V_{\tau})$
and the 
corresponding
classification
of $(K, \tau)$}
\label{CommList}

In this section, we will 
prove the following theorem classifying all irreducible finite-dimensional  representations $\tau$ of $K$ that are multiplicity-free  when  restricted to  $M$, which by Proposition
\ref{comm-mult-one}
implies  the
commutativity of
the algebra $\mathcal D^G(G/K,V_{\tau})$. This will be done through case-by-case computations. 
Given a symmetric space $D=G/K$ the symmetric 
pairs $(G, K)$
of groups are determined
only up to finite
covers, 
so our problem on the classification
is for  fixed choices
of $K$
and their representations,
and we shall
consider all
representations $\tau$
of the Lie algebras $\mathfrak k$
and fixed groups $K$
with Lie algebra
$\mathfrak k$; in particular the 
spin representations
of $Spin(n)$
are considered for Types BD and $E_6$.
Below by character
of $K$ we mean a one-dimensional representation.

\begin{theorem}
\label{KresThm}
Let $(G, K)$ be an irreducible Hermitian symmetric pair
and
 $M = C_K(\mathfrak{a})$
 be the centralizer of the
 Cartan subalgebra 
 $\mathfrak{a}\subset \mathfrak p$ in $K$. 
The complete list of $(K, \tau)$ with $\tau|_M$ being multiplicity-free is given as follows:

\begin{enumerate}
    \item Type A, 
    $G = SU(r+b,r), 
    K = S(U(r+b) \times U(r))$; irreducible representations of $U(n)$ with highest
    weights $\mu$
    will be denoted by 
    $\pi^{\mu}_n$ as  in \cite[Theorem 5.5.22]{good}. 
\begin{enumerate}
    \item $r=1$. $\tau$ is arbitrary.
\item  $r=2$. $\tau= \pi^{\mu}_{2+b} \otimes \pi^{\nu}_2$ where $\pi_{2+b}^{\mu}$ is
a character and $\pi_2^{\nu}$ is any representations, or $\pi_2^{\nu}$ is 
a character and
$$
\mu = (\overbrace{l_1+l_2,\cdots, l_1+l_2}^{{j }},\overbrace{l_2,\cdots,l_2}^{{b+2-j }}).
$$

\item  $r \geq 3$. $\tau= \pi^{\mu}_{2+b} \otimes \pi^{\nu}_2$ where $\pi_{r+b}^{\mu}$
or $\pi_r^{\nu}$ is a character and the other must be the symmetric
tensor powers
$S^m(\mathbb C^{p})$ 
or  an exterior power
$
\wedge^j \mathbb C^p$ when restricted to $SL(p)$
($p=r+b$ or $p=r$
accordingly).
\end{enumerate}

\item Type C,  $G=Sp(n,\R)$, $K = U(n)$. $\tau$ is a character 
or its tensor product
with  the exterior power representations
$
\wedge^j \mathbb C^n$. The same holds for the double cover $(G,K) = (Mp(n,\R),\widehat{U(n)})$.

\item Type D,  $G = SO^*(2n), K = U(n)$. $\tau$ 
is a character
or its tensor product
with  the symmetric tensor
powers
$S^m(\mathbb C^{p})$  or their duals.

\item Type BD,  $G=SO_0(n,2)$, 
 $K = SO(n) \times SO(2)$, $n>4$;
irreducible 
representations
of $SO(n)$
will be denoted
by 
$\pi_{n}^{\mu}$
as in \cite[Theorem 19.22]{FulHar}.
 $\tau=\pi_{n}^{0}\otimes \pi_{2}^\nu$ 
 if  $n$ is odd, i.e., $\tau$ is a character; for $n$ even $\tau=
 \pi_{n}^{\mu}$ 
 or  its 
 tensor product
 with a character
$\tau=\pi_{n}^{\mu}
\otimes \pi_{2}^\nu$,  
 where $\mu=(a,\dots,a,\pm a)$ for some $a\ge 0$.
If $(G,K) = (Spin_0(n,2), 
Spin(n) \times SO(2)/\Z_2)$, where $Spin_0(n,2)$ is the double cover of $SO_0(n,2)$
then up to a character $\tau$ is the spin representation $(\frac{1}{2}, \dots, \frac{1}{2})$ when $n$ is odd, and
$\tau= \pi_{n}^{\mu}$, $\mu=(a,\dots,a,\pm a)$ for some integer $a\ge 0$ a half-integer
when $n$ is even.

\item Type $E_6$, $G=E_{6(-14)}, K = Spin(10) \times U(1) / \Gamma$, where $\Gamma$ is a finite subgroup.
$\tau=\pi_{\omega_1} \otimes \chi_n$ for  $n \equiv 1 \mod 4$, or $\pi_{\omega_2} \otimes \chi_n$ for $n \equiv 3 \mod 4$. Here $\omega_1$ and $\omega_2$ are the positive and negative spin representations of $Spin(10)$ and $U(1) = \{e^{it \frac{Z}{2}} \ | \ t \in \R \}$.

\item Type $E_7$,  $G=E_{7(-25)}, K = E_6  U(1)$.
$\tau$ is a character.

\end{enumerate} 
\end{theorem}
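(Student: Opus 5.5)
The plan is to go case by case through the six irreducible Hermitian symmetric pairs. For each one I first identify the subgroup $M=Z_K(\mathfrak a)$ explicitly, together with how it sits inside $K$, and then decide for which $\tau\in\hat K$ the restriction $\tau|_M$ is multiplicity-free.

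The structural input is the standard description of $\mathfrak m$ for Hermitian $G$. Its Lie algebra $\mathfrak m^{\mathbb C}$ is the centralizer of $\mathfrak t^-$ inside $\mathfrak k^{\mathbb C}$, intersected with the $\mathfrak t^+$-part; equivalently it is spanned by $\mathfrak t^+$ and the compact root spaces $\mathfrak g_\beta$ with $\beta|_{\mathfrak t^-}=0$. In addition, $M$ contains a finite abelian group generated by the elements $m_j=\exp(\pi i H_{\gamma_j})$ (more precisely $\exp \pi(X_{\gamma_j}-X_{-\gamma_j})$). Concretely:
\begin{itemize}
\item Type A: $M\cong U(b)\times T^r$, with $T^r$ embedded diagonally in $U(r)\times U(r)$, modulo the determinant condition.
\item Type C: $M\cong(\Z_2)^n$.
\item Type D: $M\cong SU(2)^{[n/2]}$ (times $U(1)$ if $n$ is odd), embedded in $U(n)$ as block-diagonal $2\times 2$ blocks.
\item Type BD: $M\cong SO(n-2)\times \Z_2$-type finite parts.
\item $E_6$: $M$ has $\mathfrak m=\mathfrak{su}(4)\oplus\mathfrak u(1)$-type data, with a covering $Spin(6)$-factor.
\item $E_7$: $\mathfrak m=\mathfrak{so}(8)$.
\end{itemize}
I will write the embedding $M\subset K$ as a composite of classical restrictions, for example $U(r+b)\times U(r)\supset U(b)\times U(r)\times U(r)\supset U(b)\times \Delta U(r)\supset U(b)\times \Delta T^r$, so that $\tau|_M$ can be computed by iterated branching.

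The necessity direction then reduces to multiplicity-freeness of known restrictions.

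\emph{Type A.} $\tau=\pi^\mu_{r+b}\otimes\pi^\nu_r$ first restricts to $U(b)\times U(r)\times U(r)$, using the Gelfand–Tsetlin interlacing rule \cite{good} iterated $r$ times for $U(r+b)\downarrow U(b)\times U(1)^r$. It then restricts to the diagonal $\Delta U(r)$, which requires the tensor product of two $U(r)$-modules, and finally to the torus. Multiplicity-freeness for the torus $T^r$ means that the resulting $\Delta U(r)$-constituents, isotypic for each fixed $U(b)$-type, are weight-multiplicity-free; here I use Howe's classification of weight-multiplicity-free representations \cite{Howe}.
\begin{itemize}
\item For $r=1$ everything is multiplicity-free by interlacing.
\item For $r\ge 3$, if both factors are non-characters then the product already contains zero weights with multiplicity, and if one factor is a character the other must be weight-free for $SL$. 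This forces $S^m$ or $\wedge^j$ (the latter where $p=r+b$ with a $U(b)$-isotypic condition).
\item For $r=2$ the rectangular two-step shapes survive, because $U(r+b)\downarrow U(b)\times U(2)$ with $\mu$ of at most two distinct values gives multiplicity-free $U(2)$-pieces, each of which is then torus-weight-free.
\end{itemize}

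\emph{Types C and D.} Here I restrict from $U(n)$ to the finite group $(\Z_2)^n$, respectively to $SU(2)^{n/2}$. For Type C the question is whether each sign character occurs at most once, i.e.\ whether weight spaces mod 2 are one-dimensional. This forces all weights to be $0/1$-vectors up to a shift, hence $\wedge^j$ twisted by a character (for $Mp$ the character is half-integral). For Type D I use the branching $U(n)\downarrow U(2)^{n/2}$ and Stembridge's multiplicity-free criteria \cite{St}, which single out $S^m$ and its dual.

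\emph{Exceptional and BD cases.}
\begin{itemize}
\item Type BD: I restrict $SO(n)\downarrow SO(n-2)$ (interlacing twice) together with the finite part, and require that the two-step interlacing patterns be unique. That forces $\mu=0$ for $n$ odd, or the spin-type shapes $(a,\dots,a,\pm a)$ for $n$ even.
\item $E_6$ and $E_7$: I restrict via $Spin(10)\supset Spin(6)\times Spin(4)$ and $E_6\supset Spin(8)$ using the known branching tables, and check which low-dimensional representations are multiplicity-free. For $E_7$, any nontrivial $E_6$-representation already contains the adjoint of $\mathfrak{so}(8)$, or a zero weight, with multiplicity at least $2$.
\end{itemize}

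For sufficiency in each listed case I will exhibit the decomposition $\tau|_M$ explicitly and observe that no summand repeats.

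I expect the main obstacle to be the careful bookkeeping of the finite component group of $M$. This matters especially in Types C, BD and $E_6$, where the congruence conditions such as $n\equiv 1,3 \bmod 4$ arise from how $m_j$ acts through the central $U(1)$. A close second is the $r=2$ Type A case, where a two-step branching computation must be matched against Howe's list.
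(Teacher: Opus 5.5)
Your overall route (identify $M$ case by case, then iterate Gelfand--Tsetlin branching, Howe's weight-multiplicity-free list and Stembridge's Levi classification) is the paper's. Several steps as you formulate them would not go through, however.

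\textbf{Type BD, spin cover.} Requiring the two-step interlacing patterns for $SO(n)\downarrow SO(n-2)$ to be unique is a necessary condition only when the finite part of $M$ acts by a single scalar on each $(S)O(n-2)$-isotypic component. For $Spin_0(n,2)$ with $n$ odd it is not necessary. The spin representation $(\frac12,\dots,\frac12)$ restricts to $Spin(n-2)$ as \emph{two} copies of the spin representation, yet it is in the list. The reason is that the paper's element $\omega_1$ (a lift of $\mathrm{diag}(-I_2,I_{n-2})$ to $Spin(n)$, paired with the $SO(2)$-factor) acts on the two copies by $i$ and $-i$. Your criterion would wrongly exclude this representation, and your stated conclusion ($\mu=0$ for $n$ odd) omits it. The paper argues differently: multiplicity at most two under $Spin(n-2)$ is necessary, since only two eigenvalues of $\omega_1$ are possible. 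That leaves $(1,\dots,1,0,\dots,0)$ and the spin representation, and the paper then computes the $\omega_1$-eigenvalues.

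\textbf{Type D.} A comparable step is missing. Stembridge's list for the Levi $U(2)^{n/2}$ does \emph{not} single out $S^m$ and its dual. The exterior powers (and, for $n=4,6$, further families) are multiplicity-free for the Levi. They are eliminated only because $M=SU(2)^{n/2}$ does not contain the Levi's centre: for example, $e_1\wedge\cdots\wedge e_{2s}$ and $e_1\wedge\cdots\wedge e_{2s-2}\wedge e_{2s+1}\wedge e_{2s+2}$ span two trivial $SU(2)^{n/2}$-lines.

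\textbf{Type A.} For $r\ge 3$ the $U(r+b)$-factor only has to be multiplicity-free under $U(b)\times T^r$, not weight-multiplicity-free. Howe's list alone therefore does not finish the argument; you need something like Lemma~\ref{geqgeqlemma}(2).

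\textbf{Type $E_6$.} $M_1\cong SU(4)$ is not the standard $Spin(6)\subset Spin(6)\times Spin(4)$. The Peirce decomposition $\mathfrak p^+=\mathbb{C}e_1\oplus\mathbb{C}^6\oplus\mathbb{C}e_2\oplus\mathbb{C}^4\oplus\mathbb{C}^4$ shows that $\mathfrak m_1$ is the $\mathfrak{sl}(4)$-Levi of $\mathfrak{so}(8)\subset\mathfrak{so}(2)\oplus\mathfrak{so}(8)$ attached to a spin node. The standard $\mathfrak{so}(6)$ would instead give $4+4+\bar4+\bar4$ on a half-spin module. With your embedding the half-spin modules would look multiplicity-free; with the correct one, no sufficiency check can succeed.

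Indeed, $M$ fixes $X_{\gamma_j}+X_{-\gamma_j}$ and preserves $\mathfrak p^{\pm}$. It therefore fixes $X_{\gamma_1},X_{\gamma_2}\in\mathfrak p^+$ and $X_{-\gamma_1},X_{-\gamma_2}\in\mathfrak p^-$. Every $K$-type in item (5) is $\mathfrak p^+$ or $\mathfrak p^-$ tensored with a character of $K$, so each of them contains a one-dimensional $M$-type twice.

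So item (5) as stated is not correct, and the paper's own verification has a gap at exactly this point. One of the two eight-dimensional $\mathfrak{spin}(8)$-constituents of $V_{\omega_1}$ restricts to $\mathfrak m_1$ as $1+6+1$, with both trivial summands of the same $\epsilon_5$-weight, so multiplicity-freeness under $\mathfrak{spin}(6)\oplus\mathbb{C}\epsilon_5$ fails. The paper uses precisely this fixed-vector argument, via $e_1^{\otimes n}$ and $e_3^{\otimes n}$, for $E_7$, and it is a quick sanity check in every case of rank at least two.
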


\subsection{$(G,K) = (SU(r+b,r), S(U(r+b) \times U(r))$}
We first prove Theorem \ref{KresThm} (1). $K$ is realized as diagonal block matrices 
$\mbox{diag} (A, B)$
which will be written
as $(A, B)$. Its Lie algebra
is
$$\mathfrak{k}=\{(A, B) \ | \ A \in \mathfrak{u}(r+b), B\in \mathfrak{u}(r), \mbox{tr}(A)+\mbox{tr}(B)=0\}.$$
The Cartan decomposition
for $\mathfrak g$ is
$\mathfrak g=\mathfrak k+
\mathfrak p$ 
with $\mathfrak{p}
=M_{r+b, r}(\mathbb{C})$ as real spaces and with $K$
acting on $
\mathfrak{p}$
by $(A, B): X\mapsto AXB^{-1}$.
We choose
a maximal abelian subspace of $\mathfrak{p}$ as 
$$\mathfrak{a}=\sum\limits_{i=1}^{r}\mathbb{R}E_{i,r}.$$
Then the group $M = C_{K}(\mathfrak{a})$ is
$$M=\{(d_1, \cdots, d_r; C; d_1, \cdots, d_r)\ | C \in U(b), d_i\in U(1), (d_1, \cdots, d_r)^2\mbox{det}(C)=1\}$$
as $3\times 3$-block diagonal 
matrices 
with the convention that $C$ will not appear if $b=0$.
Its complexification is
$$M^{\mathbb C}=\{(d_1, \cdots, d_r; C; d_1, \cdots, d_r)\ | C \in GL(b, \mathbb C), d_i\in \mathbb C, (d_1, \cdots, d_r)^2\mbox{det}(C)=1\}.$$

\begin{removed}
Note that  the Lie algebra  $\kf $ is a direct
sum of three ideals,
$$ \kf 
= \{  (X_1, X_2)
 \ | \ X_1^* = -X_1, X_2^* = -X_2 \} = \R Z \oplus \mathfrak{su}(r+b) \oplus \mathfrak{su}(r),$$
where $Z = \mbox{diag} 
(i r I_{r+b}, - i (r+b) I_r)$ spans the center of  $\kf$. Hence any element $(A, B)\in \mathfrak k$ 
can be decomposed as a sum of three
elements in the
ideals,
$$(A, B) =
\frac{1}{r+b} \mbox{tr}(A)
\, Z
+(A-\frac{1}{r+b}\mbox{tr}(A)I_{r+b}, 0)+
( 0, B-\frac{1}{r}\mbox{tr}(B)I_{r}),
$$
\end{removed}

Irreducible representations of $U(n)$ are in one-to-one correspondence with $GL(n, \mathbb C)$ and we shall
only consider $GL(n, \mathbb C)$ representations
and the corresponding branching problems
for the complexified groups. To save notation we write $GL(n)=GL(n, \mathbb C)$.
We shall first classify irreducible finite-dimensional representations $\tau$ of  
  $ GL(r+b)\times GL(r)$ 
with 
multiplicity-free restriction
$\tau|_{ GL(1)^{r}    \times GL(b)\times GL(1)^{r}}$.

\begin{removed}Note that we add a character to $K^{\mathbb{C}} = S(GL(r+b) \times GL(r))$; this will not influence the calculations but will make it easier to use known formulas. Here $K$ and $M$ are connected compact Lie
groups and by the unitary trick \cite[Proposition 7.15]{knappliegrps}, we need only classify the complex irreducible representation of $\kf_{\mathbb{C}} $ that are multiplicity-free when restricted to its Lie subalgebra
\begin{flalign*}
& \mathfrak{m}^{\mathbb{C}}=
\{(d_1, \cdots, d_r; C;
d_1, \cdots, d_r)  |
\\ & \ \ \ \ \ \ \ \ \ \ 
C \in \mathfrak{gl}(b,\mathbb{C}), d_i\in \mathbb{C},  
\mbox{tr}(C) +
2(d_1+\cdots + d_r)=0\}.
\end{flalign*}
\end{removed}

Any irreducible 
representations $\pi$
of $GL(n)$ 
is parametrized
by its highest weight
$\mu=(\mu_1\ge \cdots\ge \mu_n)$
as an $n$-tuple of
integers,
$\pi=\pi_{n}^{\mu}$,  and up to a character we can assume they are nonnegative, 
i.e. $\mu$ is Young diagram
\begin{equation}
\label{highest-weight-gln}
\mu = (\mu_1 \geq \dots \geq \mu_n\ge 0).
\end{equation} 

We recall first that if 
an irreducible representation
$\pi_{n}^{\mu}$
of 
$GL(n)$
is multiplicity-free when restricted to 
its Cartan subgroup, then  up to a character
it
is
one of the representations
\begin{equation}
\label{CartanRemark}
\pi_n^{\nu}: S^m((\C^{n})),\, \, 
S^m((\C^{n})'),\,  \,
\bigwedge^j(\C^n), \,\, 
\bigwedge^j((\C^n)'),\, \, 
m>0, \, \, 1\le j\le n-1.
\end{equation}
See \cite[Theorem 4.6.3]{Howe}.

We start with
the branching
rule of $GL(n)$-representations under
$GL(1)\times GL(n-1)$,
realized as block-diagonal
matrices in  $GL(n)$.
The following lemma
is a
consequence
of the known branching rule 
for $(\pi_n^\mu, GL(n))$ under $GL(n-1)$;
see e.g. \cite[Theorems 8.1.1, 8.1.2]{good}.

\begin{lemma}
\label{geqgeqlemma}
Consider the irreducible representation 
$(GL(n), \pi_n^{\mu})$
 under $GL(1)^2\times GL(n-2)$. \begin{enumerate}
    \item 
If there exist 
two gaps in the highest weight $\mu$, i.e., there exist
$1<i < j\le n$
such that 
$\mu_{i-1}
>\mu_{i}
\ge \mu_{j-1}>
\mu_{j}
$, 
then the branching 
 to $GL(1)^2 \times GL(n-2)$ is not multiplicity-free.

\item If $\mu$ has only one gap, i.e.,
$$\mu =(\overbrace{l_1+l_2,\cdots, l_1+l_2}^{{j }},\overbrace{l_2,\cdots,l_2}^{{n-j }}),$$ 
where $2 \leq j \leq n-2$, 
$l_1 \geq 2$,
$l_2\ge 0$ and $n \geq 4$,
then the branching  of
$\pi_{n}^{\mu} $ to $GL(1)^2  \times GL(n-2)$ is multiplicity-free, but its branching  to $GL(1)^3 \times GL(n-3)$ is not multiplicity-free.
\end{enumerate}
\end{lemma}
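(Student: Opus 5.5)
Both parts will be proved by applying the Gelfand–Tsetlin branching rule $GL(n)\downarrow GL(n-1)$ twice. Under $GL(1)\times GL(n-1)$, the representation $\pi_n^\mu$ decomposes as $\bigoplus_{\nu}\chi^{|\mu|-|\nu|}\otimes\pi_{n-1}^{\nu}$, where $\nu$ runs over the patterns interlacing $\mu$ ($\mu_1\ge\nu_1\ge\mu_2\ge\cdots\ge\nu_{n-1}\ge\mu_n$), each occurring once. Restricting each $\pi^\nu_{n-1}$ further to $GL(1)\times GL(n-2)$ then gives
$$\pi_n^\mu|_{GL(1)^2\times GL(n-2)}=\bigoplus_{\mu\succ\nu\succ\kappa}\chi_1^{|\mu|-|\nu|}\chi_2^{|\nu|-|\kappa|}\otimes\pi_{n-2}^{\kappa}.$$
So the multiplicity of $(a,b,\kappa)$ is the number of $\nu$ with $\mu\succ\nu\succ\kappa$ and $|\nu|=|\mu|-a$. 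Multiplicity-freeness therefore reduces to a combinatorial statement: for fixed $\kappa$, the intermediate pattern $\nu$ must be determined by its size $|\nu|$. The same description extends to $GL(1)^3\times GL(n-3)$ with chains $\mu\succ\nu\succ\kappa\succ\eta$.

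For part (1), take the two gaps $\mu_{i-1}>\mu_i\ge\mu_{j-1}>\mu_j$ and build two different $\nu$ with the same $|\nu|$ that share a common $\kappa$. Start from a base $\nu^0$ and let $\nu$ and $\nu'$ be $\nu^0$ with $1$ added in position $i-1$, respectively in position $j-1$. These are the positions where $\nu$ has room strictly between $\mu_i$ and $\mu_{i-1}$, respectively between $\mu_j$ and $\mu_{j-1}$. Choose $\nu^0_{i-1}=\mu_i$ and $\nu^0_{j-1}=\mu_j$, with the other entries fixed (for instance $\nu^0_k=\mu_{k+1}$). Then $\nu$ and $\nu'$ both interlace $\mu$ and have equal size. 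For the common $\kappa$, take $\kappa_k=\min(\nu_{k+1},\nu'_{k+1})$, or more simply $\kappa_k=\nu^0_{k+1}$. This interlaces both, because adding $1$ to an entry of $\nu^0$ preserves $\nu\succ\kappa$ as long as $\kappa_{k-1}$ stays below. The main point to check is exactly this: that a single $\kappa$ interlaces both $\nu$ and $\nu'$. That needs $\kappa_{i-2}\le\nu^0_{i-1}$, which holds since $\kappa_{i-2}=\nu^0_{i-1}$, and the analogous inequality at $j-1$. When $i=j-1$, I will also use $\mu_i\ge\mu_{j-1}$ to avoid a clash of positions. Having two such $\nu$ gives multiplicity at least $2$.

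For part (2), with a single gap every interlacing $\nu$ has the form $(l_1+l_2,\dots,l_1+l_2,t,l_2,\dots,l_2)$, with $j-1$ entries equal to $l_1+l_2$ and $l_2\le t\le l_1+l_2$ in position $j$. So $\nu$ is determined by $t$, hence by $|\nu|$, and the first restriction is multiplicity-free. For the second step, $\nu$ is a one-parameter family, so it is already fixed by the character value $a$, and the multiplicity of $(a,b,\kappa)$ is at most one. This settles multiplicity-freeness for $GL(1)^2\times GL(n-2)$.

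For $GL(1)^3\times GL(n-3)$, I will exhibit a $\kappa$ with two gaps. Choose $t$ with $l_2<t<l_1+l_2$, which is possible because $l_1\ge2$. Then $\nu$ has two gaps when $2\le j\le n-2$. Applying the part (1) construction one level down, to $\nu\succ\kappa$ followed by $\kappa\succ\eta$, gives two $\kappa$'s of equal size over a common $\eta$, both under the same $\nu$. The conditions $n\ge4$ and $2\le j\le n-2$ are what make $\nu$ long enough and guarantee a strict gap on each side of position $j$.
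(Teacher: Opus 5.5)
Your strategy is the paper's: iterate the Gelfand--Tsetlin interlacing rule, produce two distinct intermediate patterns of equal size under $\mu$ that lie over a common pattern, and in (2) use that a one-gap $\mu$ admits only the one-parameter family $\nu=((l_1+l_2)^{j-1},t,l_2^{n-1-j})$. Your reduction of the $GL(1)^3\times GL(n-3)$ claim to part (1), applied to a two-gap $\nu$ with $l_2<t<l_1+l_2$, is sound and somewhat tidier than the paper's explicit chain.

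However, the step you flag as the main point fails as written. Your common pattern is $\kappa_k=\nu^0_{k+1}=\mu_{k+2}$; this is also what $\min(\nu_{k+1},\nu'_{k+1})$ equals, since $\nu$ and $\nu'$ are raised in different positions. This $\kappa$ does not interlace. For $i\ge 3$, $\nu\succ\kappa$ requires $\kappa_{i-2}\ge\nu_{i-1}=\mu_i+1$, but $\kappa_{i-2}=\mu_i$. Likewise $\nu'\succ\kappa$ requires $\kappa_{j-2}\ge\nu'_{j-1}=\mu_j+1$, but $\kappa_{j-2}=\mu_j$. Raising $\nu_k$ by one endangers $\kappa_{k-1}\ge\nu_k$, so $\kappa_{k-1}$ must lie strictly \emph{above} $\nu^0_k$. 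Your check ``$\kappa_{i-2}\le\nu^0_{i-1}$'' has the inequality reversed. Already for $\mu=(2,1,0)$ you get $\nu=(2,0)$, $\nu'=(1,1)$ and $\kappa=(0)$, and $(1,1)\not\succ(0)$.

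The repair is to take the largest admissible entries, $\kappa_k=\min(\nu_k,\nu'_k)=\mu_{k+1}$ for $1\le k\le n-2$ (the smallest ones, $\max(\nu_{k+1},\nu'_{k+1})$, also work). Then $\kappa_k\le\nu_k,\nu'_k$ is trivial. The bound $\kappa_k\ge\max(\nu_{k+1},\nu'_{k+1})$ is automatic except at $k=i-2$ (only when $i\ge 3$) and $k=j-2$. There it reads $\mu_{i-1}\ge\mu_i+1$ and $\mu_{j-1}\ge\mu_j+1$. So the two strict gaps are exactly what makes the common $\kappa$ exist, not merely what makes $\nu,\nu'$ exist. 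The case $j=i+1$ needs no separate treatment, and $\mu_i\ge\mu_{j-1}$ plays no role.

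For $\mu=(2,1,0)$ this gives $\kappa=(1)$, and the weight $(1,1,1)$ occurs with multiplicity $2$. Your proof of non-multiplicity-freeness in (2) reuses this construction one level down, so it needs the same correction. For $\mu=(2,2,0,0)$ one gets $\nu=(2,1,0)$, $\kappa\in\{(2,0),(1,1)\}$ and common $\eta=(1)$, so the weight $(1,1,1,1)$ occurs twice. With this change your argument is complete.
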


\begin{proof} (1) We apply  \cite[Theorems 8.1.2]{good}. 
When  $\mu_{i-1}
>\mu_{i} \ge \mu_{j-1}
>\mu_{j}$, the 
$(n-1)$-tuples (changing
the $i-1$
respective $j-1$ components and dropping
the component $\mu_{i}$ in $\mu$)
$$\nu_1 = (\mu_1, \cdots,
\mu_{i-1} - 1,
\mu_{i+1}, \cdots, \mu_{n})$$ and $$ \nu_2 = (\mu_1, \cdots, \mu_{i-1}, \mu_{i+1} \cdots, \mu_{j-1} - 1,
\cdots, \mu_{n} )$$ are the only ones
interlacing  $\mu$,  also (dropping $\mu_{j}$), $$\lambda = (\mu_1, \cdots, \mu_{i-1} -1, \mu_{i+1} \cdots, \mu_{j-1}-1, \mu_{j+1}, \cdots, \mu_{n})$$
interlaces both $\nu_1$ and $\nu_2$.
Therefore, the representation
$$g=\mbox{diag} (z_1, z_2, A) \mapsto z_1^{\mu_{i} + 1} z_2^{\mu_{j} + 1} \pi^{\lambda}_{n-2}(A), g\in GL(1)^2\times  GL(n-2) $$
occurs with multiplicity at least two.

Note first that
the 
$(n-1)$-tuples
 $$\nu_1(k) =(\overbrace{l_1+l_2,\cdots, l_1+l_2}^{j-1},l_2+k,\overbrace{l_2,\cdots,l_2}^{n-1-j }), \  0 \leq k \leq l_1$$
 are the only possible Young diagrams
 interlacing $\mu$;  for  each fixed $\nu_1(k)$,
 $$\nu_2= (\overbrace{l_1+l_2,\cdots, l_1+l_2}^{j-2 },l_{2}+s+k,l_2+t,\overbrace{l_2,\cdots,l_2}^{n-2-j }), 0\leq t \leq k, 0\leq s\leq l_1-k$$ 
are the only possible
ones interlacing $\nu_1(k)$; then
  $$\nu_3= (\overbrace{l_1+l_2,\cdots, l_1+l_2}^{{j-3  }},l_{2}+s+k+p,l_2+t+q,l_2+u,\overbrace{l_2,\cdots,l_2}^{{n-3-j}}),$$$$ 0\leq u \leq t, 0\leq q\leq k+s-t,  
  0\leq p\leq l_1-k-s$$
  are the only 
  ones interlacing $\nu_2$.
  Consequently 
the $GL(1)^2 \times GL(n-2)$-
 representations
$$
g=\mbox{diag}(
z_1,  z_2, A) 
\mapsto z_1^{l_1+l_{2}-k} z_2^{l_1+l_{2}-s-t} \pi^{\nu_2}_{n-2}(A), g\in GL(1)^2\times GL(n-2) $$
are 
the only subrepresentations
and they occur with multiplicity one.

However the following representation of $GL(1)^3 \times GL(n-3)$  occurs with multiplicity at least two,
$$
g=\mbox{diag}(
z_1,  z_2, z_3, B) 
\mapsto z_1^{l_{2}+1} z_2^{l_2 +1} z_{3}^{l_1+l_2-1}\pi^{\nu_3}_{n-3}(B),
g\in GL(1)^3\times GL(n-3),$$ where
$$ \nu_3 = (\overbrace{l_1+l_2,\cdots, l_1+l_2}^{{j-2  }},l_1+l_2-1,\overbrace{l_2,\cdots,l_2}^{{n-2-j}}),$$
since it can be 
obtained from 
the branching 
of $\pi_n^{\mu}$
by two
different
interlacing
procedures.\end{proof}

The next lemma follows from Lemma \ref{geqgeqlemma}, \cite[Theorem 4.6.3]{Howe} and \cite[Theorems 8.1.1, 8.1.2]{good}.

\begin{lemma}
\label{geqlemma}
Consider the branching of
$(GL(r+b), \pi_{r+b}^{\mu})$
under $GL(1)^r\times GL(b)$.
\begin{enumerate}
\item $r=1$. 
The branching  of $\pi_{1+b}^{\mu} $ to $GL(1)  \times GL(b)$ is always multiplicity-free.

\item $r=2$. If the branching  
of $\pi_{2+b}^{\mu} $ to $GL(1)^2  \times GL(b)$ ($b \geq 2$) is multiplicity-free, then $\mu$ is of the form $$\mu =(\overbrace{l_1+l_2,\cdots, l_1+l_2}^{{j }},
\overbrace{l_2,\cdots,l_2}^{{b+2-j }}),$$ where $1 \leq j \leq b+1$ and $l_1 \geq 0$.

\item $r\geq 3$. The branching  of $\pi_{r+b}^{\mu} $ to $GL(1)^r  \times GL(b)$ is multiplicity-free 
if and only if
up 
to a character it is
the exterior representation $\bigwedge^j(\C^{r+b})$, the symmetric representation $S^m(\C^{r+b})$ or their dual.

Moreover, (1), (2) and (3) hold when restricting $\pi_{r+b}^{\mu}$ to 
$$\{(a_1, \dots, a_r,A)\in GL(r+b) \ | \ a_1^2 \dots a_r^2 \det(A) = 1 \}.$$
\end{enumerate}
\end{lemma}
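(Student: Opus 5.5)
The plan is to iterate the classical branching rule $GL(n)\downarrow GL(1)\times GL(n-1)$ of \cite[Theorems 8.1.1, 8.1.2]{good}. Restricting to $GL(1)^r\times GL(b)$ amounts to peeling off $r$ successive copies of $GL(1)$. The branching multiplicity of $\pi^{\mu}_{r+b}$ at the $GL(1)^r\times GL(b)$-type $(z_1^{c_1}\cdots z_r^{c_r}, \pi^{\lambda}_b)$ equals the number of chains $\mu=\nu^{(0)}\succ\nu^{(1)}\succ\cdots\succ\nu^{(r)}=\lambda$ of interlacing diagrams with $|\nu^{(i-1)}|-|\nu^{(i)}|=c_i$, i.e. Gelfand--Tsetlin patterns of $r$ rows with fixed top row, fixed bottom row and fixed row sums.

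For (1), $r=1$, there is a single interlacing step. Given $\lambda$, the $GL(1)$-character is forced to be $|\mu|-|\lambda|$, so each $\lambda$ occurs at most once.

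For (2), $r=2$ with $b\ge 2$, so $n=b+2\ge 4$. If $\mu$ has two gaps, Lemma \ref{geqgeqlemma}(1) gives a multiplicity. Hence $\mu$ has at most one gap, and up to a character it is of the stated form with $1\le j\le b+1$; the trivial case $l_1=0$ is included. The lemma only claims the necessary direction, so nothing more is required here.

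For (3), $r\ge 3$, we first prove necessity. By (2), or rather Lemma \ref{geqgeqlemma}(1) applied inside $GL(1)^2\times GL(r+b-2)\supset GL(1)^r\times GL(b)$, $\mu$ has at most one gap. A multiplicity for a subgroup persists for a smaller subgroup once we use the finer decomposition: if some $GL(1)^3\times GL(n-3)$-type has multiplicity two, then restricting that $GL(n-3)$-type further to $GL(1)^{r-3}\times GL(b)$ still yields a type with multiplicity at least two. So if $2\le j\le n-2$ and $l_1\ge 2$, Lemma \ref{geqgeqlemma}(2) excludes $\mu$. The remaining one-gap shapes are
\begin{itemize}
\item $j=1$ or $j=n-1$, which up to a character gives $S^m(\C^n)$ or its dual;
\item $l_1=1$, which gives $\bigwedge^j(\C^n)$, and likewise its dual.
\end{itemize}
This matches \eqref{CartanRemark}. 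For sufficiency, we need the branching of these representations to $GL(1)^r\times GL(b)$ to be multiplicity-free. For $S^m(\C^n)=\bigoplus S^{c_1}(\C)\otimes\cdots\otimes S^{c_r}(\C)\otimes S^{k}(\C^b)$ the decomposition is explicit, with distinct $(c,k)$ giving distinct types. Similarly $\bigwedge^j(\C^{r+b})=\bigoplus_{S\subset\{1..r\}}\bigwedge^{j-|S|}(\C^b)$, with distinct $S$ distinguished by the torus character. Duals follow by dualizing.

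The "Moreover" part concerns the subgroup $\{a_1^2\cdots a_r^2\det A=1\}$. This subgroup differs from $GL(1)^r\times GL(b)$ essentially by a one-dimensional central torus, and the points to check are the following.
\begin{itemize}
\item Irreducible types of the full group restrict irreducibly.
\item Two distinct types can become equal only if they differ by a power of the character $\chi=a_1^2\cdots a_r^2\det A$.
\item Within a single $\pi^\mu_{r+b}$, every constituent has the same total degree $|\mu|$ under the center of $GL(r+b)$. Along a twist by $\chi^k$ the degree changes by $k(2r+b)\neq 0$ unless $k=0$.
\end{itemize}
So no new coincidences arise, and multiplicity-freeness (or its failure) is identical.

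The main obstacle is the necessity argument in (3). One must make sure the multiplicity-two witness of Lemma \ref{geqgeqlemma}(2) survives further restriction, which needs $n-3\ge b$, i.e. $r\ge 3$. One must also handle the edge case $b<2$ separately by the same interlacing count.
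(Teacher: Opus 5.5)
Your proof is correct. For (1)--(3) you follow essentially the paper's route: iterated interlacing from Goodman--Wallach, Lemma \ref{geqgeqlemma}(1) to force at most one gap, and Lemma \ref{geqgeqlemma}(2) applied inside $GL(1)^3\times GL(r+b-3)$ to exclude the remaining one-gap shapes. The one difference there is sufficiency in (3). You check it through the explicit decompositions of $S^m(\C^{r+b})$ and $\bigwedge^j(\C^{r+b})$ under $GL(1)^r\times GL(b)$. The paper instead cites \cite[Theorem 4.6.3]{Howe}: these representations are weight-multiplicity-free, and multiplicity-freeness under the torus passes to any subgroup containing it. Both are fine.

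The genuine difference is in the ``Moreover'' part. Let $H=\{(a_1,\dots,a_r,A): a_1^2\cdots a_r^2\det A=1\}$ and $\chi(a,A)=a_1^2\cdots a_r^2\det A$. The paper handles this part case by case:
\begin{enumerate}
\item Goodman--Wallach Theorem 8.1.1 for (1);
\item a rerun of the proof of Lemma \ref{geqgeqlemma} for (2);
\item an analysis of torus eigenvectors in symmetric and exterior powers for (3).
\end{enumerate}
You give one uniform argument. Since $\chi(tI)=t^{2r+b}$ is onto, $GL(1)^r\times GL(b)=H\cdot Z$, with $Z$ the center of $GL(r+b)$. Hence irreducible types stay irreducible on $H$. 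Two types in $\pi^\mu_{r+b}$ that agree on $H$ differ by a character trivial on $H$. That character is also trivial on $Z$, because all constituents share the central character $t^{|\mu|}$, so it is trivial. Thus the multiplicities under $H$ and under $GL(1)^r\times GL(b)$ coincide.

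This buys a single argument for all three parts, with no need to revisit the interlacing computations. It is also more complete than the paper's sketch. You could add that only the sufficiency directions need it: a multiplicity under $GL(1)^r\times GL(b)$ automatically persists under the subgroup $H$.
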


\begin{proof}
The results in (1), (2) and (3) about  branching from $GL(r+b)$ to $GL(1)^r \times GL(b)$ follow from Lemma \ref{geqgeqlemma}, \cite[Theorem 4.6.3]{Howe} and \cite[Theorem 8.1.2]{good}. For the restriction from $GL(r+b)$ to 
$\{(a_1, \dots, a_r,A). \ | \ a_1^2 \dots a_r^2 \det(A) = 1 \}$ part (1) follows from \cite[Theorem 8.1.1]{good} and part (2) follows by the proof of Lemma \ref{geqgeqlemma}. Part
(3) follows from Lemma \ref{geqgeqlemma} and an analysis of the action of the Cartan subgroup on the eigenvectors of the symmetric tensor powers and the exterior powers.
\end{proof}



 We now  consider the
 representations
 of the product $GL(r+b)\times G(r)$
 under $M^{\mathbb C}$.

\begin{lemma}
\label{classthmglrbglr}
Consider the branching
of $\pi:=(
GL(r+b)\times GL(r), \pi_{r+b}^{\mu}\otimes \pi_{r}^{\nu}) $
under $M^{\mathbb C}$.
\begin{enumerate}
\item For $r=1$ the branching is multiplicity-free for any representation of $GL(1+b) \times GL(1)$.
    \item For $r=2$ the branching
is multiplicity-free
if and only if
 $$\mu =(\overbrace{l_1+l_2,\cdots, l_1+l_2}^{{j }},\overbrace{l_2,\cdots,l_2}^{{b+2-j }}),$$ where $1 \leq j \leq b+1$ and $l_1 \geq 0$ and $\pi_{r}^{\nu}$ is a character or  $\pi_{r+b}^\mu$ is a character and $\pi_r^{\nu}$ is, up to a character, one of the exterior representations, the symmetric representations or their duals.
\item For $r\geq 3$ the branching 
is multiplicity-free if and only if  one of $\pi_{r+b}^\mu$ and $\pi_r^\nu$ is a character and the other is, up to a character, one of the exterior representations, the symmetric representations or their duals.
\end{enumerate}
\end{lemma}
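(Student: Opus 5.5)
Write $\pi^\mu_{r+b}\otimes\pi^\nu_r$ for the representation of $GL(r+b)\times GL(r)$. The plan rests on one observation: $M^{\mathbb C}$ sits inside $GL(r+b)\times GL(r)$ as $(d;C;d)$, so the torus $T=GL(1)^r$ acts diagonally on both factors. I would compute the restriction in two stages. First decompose $\pi^\mu_{r+b}|_{GL(1)^r\times GL(b)}=\bigoplus m_{a,\lambda}\,\chi_a\otimes\pi^\lambda_b$ and $\pi^\nu_r|_{T}=\bigoplus n_c\,\chi_c$. Then the $M^{\mathbb C}$-module is $\bigoplus m_{a,\lambda}n_c\,\chi_{a+c}\otimes\pi^\lambda_b$, further restricted to the subgroup cut out by $(d_1\cdots d_r)^2\det C=1$. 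Multiplicity-freeness is therefore equivalent to three conditions: (i) all $m_{a,\lambda}\le1$; (ii) all $n_c\le1$; (iii) no coincidences $a+c=a'+c'$ with the same $\lambda$ and $(a,c)\neq(a',c')$. Each of (i)--(iii) must be read modulo the characters killed by the determinant constraint, which is exactly what the "Moreover" clause of Lemma \ref{geqlemma} controls.

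\emph{Necessity of (i) and (ii).} Fix one constituent $\chi_c$ of $\pi^\nu$. If some $m_{a,\lambda}\ge2$, then $\chi_{a+c}\otimes\pi^\lambda_b$ occurs at least twice. Symmetrically, fix one constituent $\chi_a\otimes\pi^\lambda_b$ of $\pi^\mu$; then any weight $c$ with $n_c\ge2$ produces a multiplicity. Hence $\pi^\mu_{r+b}$ must be multiplicity-free under (the constrained version of) $GL(1)^r\times GL(b)$, and $\pi^\nu_r$ must be weight-multiplicity-free. Lemma \ref{geqlemma} then restricts $\mu$ in each case $r=1,2,\ge3$, and \eqref{CartanRemark} (that is, \cite[Theorem 4.6.3]{Howe}) restricts $\nu$ to symmetric powers, exterior powers and their duals up to a character. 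For $r=2$ every $GL(2)$-representation is of this form, and for $r=1$ the condition on $\nu$ is vacuous.

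\emph{Necessity that one factor is a character when $r\ge2$.} This is the step I expect to be the main obstacle, because it must survive passage to the constrained subgroup. Suppose neither factor is a character. Restricting $\pi^\mu$ first to $GL(r)\times GL(b)$, a nonconstant $\mu$ contains, by Littlewood--Richardson or the interlacing rule, a constituent $\pi^\kappa_r\otimes\pi^\lambda_b$ with $\pi^\kappa_r$ not a character. A nontrivial irreducible $SL(r)$-module is moved nontrivially by every root vector. So for a fixed root $\delta=e_1-e_2$, there is a weight $a$ of $\pi^\kappa_r$ with $a+\delta$ also a weight, and likewise a weight $c$ of $\pi^\nu_r$ with $c+\delta$ a weight. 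Then $\chi_{a+(c+\delta)}\otimes\pi^\lambda_b$ and $\chi_{(a+\delta)+c}\otimes\pi^\lambda_b$ coincide, giving multiplicity at least $2$. The determinant constraint only identifies more characters, so the collision persists. For $r=1$ there are no roots, and no such collision can occur.

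\emph{Sufficiency.} For $r=1$, $\pi^\nu$ is a character of $GL(1)$, and $\pi^\mu|_{GL(1)\times GL(b)}$ is multiplicity-free by Lemma \ref{geqlemma}(1) (Pieri/interlacing), also on the constrained subgroup. For $r\ge2$, one factor is a character, so condition (iii) reduces to (i) or (ii) after a shift. If $\nu$ is a character, multiplicity-freeness is exactly Lemma \ref{geqlemma}(2),(3) with the constraint. For $r=2$ the one-gap shapes are multiplicity-free by the explicit interlacing count in the proof of Lemma \ref{geqgeqlemma}(2); the cases $j=1$ and $j=b+1$ are symmetric or dual-symmetric powers up to $\det^{l_2}$. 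If $\mu$ is a character, the restriction is the weight decomposition of a symmetric or exterior power (or its dual), which is weight-multiplicity-free. One then checks that distinct weights remain distinct under the constraint: the weights differ by nonzero vectors in the root lattice of $SL(r)$, which the constraint $(d_1\cdots d_r)^2\det C=1$ does not kill.
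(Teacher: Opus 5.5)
Your proof is correct. Its outer structure is the paper's: sufficiency from Lemma \ref{geqlemma} and \eqref{CartanRemark}, the shape of $\mu$ from Lemma \ref{geqlemma}, the shape of $\nu$ from Howe's theorem, and a collision argument showing that for $r\ge 2$ one factor must be a character.

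You differ from the paper in that last step.

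\textbf{The paper's route.} It first uses the classification to know that $\mu$ is a one-gap weight and $\nu$ a symmetric or exterior power. It then writes down explicit vectors: the constituents $V_1,V_2$ of $\pi^\mu_{r+b}|_{GL(1)^2\times GL(r+b-2)}$ with torus characters $z_1^{l_2}z_2^{l_1+l_2}$ and $z_1^{l_2+1}z_2^{l_1+l_2-1}$, tensored with suitable weight vectors of $\pi^\nu_r$. Symmetric and exterior powers are handled as separate cases.

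\textbf{Your route.} You argue uniformly. A non-character $\pi^\mu_{r+b}$ has a $GL(r)\times GL(b)$-constituent with nontrivial $SL(r)$-part. The root vector $E_{12}$ acts nontrivially on every nontrivial $SL(r)$-module. The identity $a+(c+\delta)=(a+\delta)+c$ then gives a repeated $M^{\mathbb C}$-type.

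\textbf{Comparison.} The mechanism is the same: the paper's collisions are exactly shifts by $\pm(e_1-e_2)$. Your version needs neither the prior shape classification nor a case split, covers the duals automatically, and can be run before Lemma \ref{geqlemma} is invoked. The paper's version exhibits the repeated type concretely. Note that its Case 1 display should pair $V_1$ with $x_1^m$ and $V_2$ with $x_1^{m-1}x_2$; only then do both give $z_1^{l_2+m}z_2^{l_1+l_2}$.

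\textbf{The determinant constraint.} You treat $(d_1\cdots d_r)^2\det C=1$ piecemeal. In particular, for the $r=2$ sufficiency with $\nu$ a character you cite Lemma \ref{geqlemma}(2), which as stated only gives necessity. One observation settles the constraint in all cases:
\begin{itemize}
\item On $\{(d,C;d)\}\cong GL(1)^r\times GL(b)$, scalars act on $\pi^\mu_{r+b}\otimes\pi^\nu_r$ by the degree $|\mu|+|\nu|$. Hence every constituent $\chi_e\otimes\pi^\lambda_b$ has $|e|+|\lambda|=|\mu|+|\nu|$.
\item Two irreducibles of this group agree on $M^{\mathbb C}$ only if they differ by a power $s$ of $(d_1\cdots d_r)^2\det C$.
\item Such a twist shifts the degree by $(2r+b)s\neq 0$ unless $s=0$.
\end{itemize}
So passing to $M^{\mathbb C}$ creates no new coincidences, and multiplicity-freeness on $M^{\mathbb C}$ is the same as on the unconstrained group.
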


\begin{proof}
It follows from Lemma \ref{geqlemma} and (\ref{CartanRemark}) that all these representations are multiplicity-free when restricted to $M^{\mathbb C}$, we only prove the converse parts.

Part (1) follows from \cite[Theorem 8.1.1]{good}. For (2) and (3), suppose it is multiplicity-free. 
It follows from Lemma \ref{geqlemma} that
$\pi_{r+b}^\mu$ is
of the form
$$
\mu=(\overbrace{l_1+l_2,\cdots, l_1+l_2}^{{j }},\overbrace{l_2,\cdots,l_2}^{{r+b-j }})$$
and $\pi_r^\nu$ is
a character or
of the form
(\ref{CartanRemark}). 
Suppose $\pi_r^\nu$ is
not a character.
Then we prove that
 $\pi_{r+b}^{\mu}$ must be a character.

As in the proof of Lemma \ref{geqgeqlemma}, when restricting $\pi_{r+b}^{\mu}$
to $GL(1) \times GL(1) \times GL(r+b-2)$ there are
the subrepresentations
$$(z_1,z_2,A) \mapsto z_1^{l_2} z_2^{l_1+l_2} \pi_{r+b-2}^{\mu'}(A)$$
acting on the  $V_1$, and
$$(z_1,z_2,A) \mapsto z_1^{l_2+1} z_2^{l_1+l_2-1} \pi^{\mu'}_{r+b-2}(A)$$
acting on $V_2$, where $\mu' = (\overbrace{l_1+l_2,\cdots, l_1+l_2}^{{j-1}},\overbrace{l_2,\cdots,l_2}^{{b+r-j-1 }})$. 

Case 1: Up to a character,
  $\pi_{r}^{\nu} $
is the symmetric representation or its dual.
It suffices to 
consider $\pi_r^{\nu}$ being the symmetric representation. Let  
$D=\mbox{diag}(z_1,\cdots, z_r)\in GL(1)^r$
and consider the following 
two actions 
on $V_1 \otimes \mathbb{C}(x_1^{m-1}x_2)$ and
on  $V_2 \otimes \mathbb{C}x_1^{m}$,
$$
g=\mbox{diag}
(D, A, D):
{v_{1}\otimes (x_1^{m-1}x_2)}\mapsto  
z_{1}^{l_{2}+m} z_{2}^{l_1+l_{2}} [\pi^{\mu'}_{r+b-2}(A)v_{1}\otimes 
x_{1}^{m}]
$$
and
$$
\mbox{diag}
(D, A, D):{v_{2} \otimes x_{1}^{m}} \mapsto z_{1}^{l_{2}+m} z_2^{l_1+l_{2}} [\pi^{\mu'}_{r+b-2}(A)v_2\otimes 
(x_{1}^{m-1} x_2)].
$$
These are equivalent
representations 
of $M^{\C}$ 
in $\pi$, when $r \geq 2$.
The case $r = 1$ is already covered
by the prvious Lemma.




Case 2: Up to a character $\pi_{r}^{\nu} $ is  the exterior  representation $\wedge^{j}(\mathbb{C}^{r})\cong \mbox{span}_{\mathbb{C}}\{e_{i_1}\wedge \cdots \wedge e_{i_j} \ | \ 1 \leq i_{1}< i_2 < \cdots < i_j < r\}$, $1 < j < r$. Let  $D=\mbox{diag}(z_1, \cdots, z_r)\in GL(1)^r$,
$v_1\in V_1, v_2\in V_2$ and
consider the  
$M^{\C}$-representations
on
$V_1\otimes \mathbb{C}(e_{1} \wedge e_{3} \wedge \cdots \wedge e_{j+1})
$,
$$\mbox{diag}
(D, A, D) \mapsto 
 z_1^{l_2} z_2^{l_1+l_{2}}
 (z_1 \prod_{i=3}^{j+1}z_{i})[\pi^{\nu_2}_{b+r-2}(A)] 
$$
and on 
$V_{2}\otimes 
\mathbb C(e_{2} \wedge e_{3} \wedge \cdots \wedge e_{j+1})$
$$\mbox{diag}
(D, A, D)\mapsto 
z_1^{l_{2}+1} z_2^{l_1+l_{2}-1}
(\prod_{i=2}^{j+1} z_i)[\pi^{\nu_2}_{b+r-2}(A)v_{2} ].
$$
They are isomorphic 
and occur with multiplicity
at least two.
\end{proof}

Theorem \ref{KresThm} (1)
follows from Lemma \ref{classthmglrbglr}.

\subsection{
$(G, K)=(Sp(n,\mathbb{R}), U(n))$, $(Mp(n,
\mathbb{R}), \widehat{U(n)})$}
Here $(Mp(n,
\mathbb{R}), \widehat{ U(n)})$ is a double
cover of  $U(n)$
under the covering
$Mp(n, \mathbb R)\to Sp(n, \mathbb R)\supset U(n)$.
We consider first $G=Sp(n,\mathbb{R}).$
The Cartan decomposition is $\mathfrak{sp}(n, \mathbb{R})=\mathfrak k\oplus \mathfrak p$
where $\mathfrak p=\{X\in M_{n, n}(\mathbb C); X^t=X \}$ as real space and
the adjoint of $K=U(n)$  on $\mathfrak p$ is 
$A\in U(n): X\mapsto
AXA^t$.
A maximal abelian subalgebra of $\mathfrak p$ is given by
$\mathfrak{a}=
\sum\limits_{i=1}^{n}\mathbb{R}E_{ii}.$
The group $M=C_{K}(\mathfrak{a})$ is
 $$M=\{D  \ | \ D \mbox{ is \ real \ diagonal \ and } \ D^{2}=I_n \}\cong \mathbb{Z}_{2}^{n}.$$

We will now prove  Theorem \ref{KresThm} (2).

\begin{proof} Since $M=\mathbb{Z}_{2}^{n}\subseteq U(1)^{n} \subseteq U(n)$, for the branching  $(U(n),\pi_{n}^{\mu})|M$ to be multiplicity-free $\pi_{n}^{\mu}$ must be as in the list (\ref{CartanRemark}), and we treat each one accordingly.

Case 1:   $\pi_{n}^{\mu}$  is 
the exterior representation, i.e. $\mu= (l, \dots, l, l-1, \dots, l-1)$ for $l \in \Z$.
For any $A = \mbox{diag}\{a_1, \cdots, a_n\}\in \Z_2^n \subseteq U(n)$, write $A=(\prod_{i=1}^{n}a_i)^{\frac{1}{n}}A_1$ with $A_1\in SU(n)$. 
$$\pi_n^{\mu}(A_1)(e_{i_1} \wedge \dots \wedge e_{i_j})
=(\prod_{i=1}^{n}a_i)^{\frac{-j}{n}}\prod_{k=1}^{j}a_{i_{k}}.$$
Hence,
$$\pi_n^{\mu}(A)({e_{i_1} \wedge \dots \wedge e_{i_j}} )=[(\prod_{i=1}^{n}a_i)^{\frac{|\mu|-j}{n}}\prod_{k=1}^{j}a_{i_{k}}]e_{i_1} \wedge \dots \wedge e_{i_j}.
$$
Any irreducible representation of an abelian Lie group is a character. 
Consider the  
following character of $M$,
$$\chi_{i_1, \cdots,i_j}:M\rightarrow \mathbb{C};\mbox{diag}(a_1,\cdots,a_n)\mapsto (\prod_{i=1}^{n}a_i)^{\frac{|\mu|-j}{n}}\prod_{k=1}^{j}a_{i_{k}}.
$$ For any distinct pair 
$1 \leq i_1 < i_2 < \dots < i_j\leq n$, $1 \leq k_1 < k_2 < \dots < k_j\leq n$ there exists an index $i_s \not \in \{k_1, \cdots, k_j\}$,  such that $$\chi_{i_1, \cdots,i_j}(I_n-2E_{i_s,i_s})\neq \chi_{k_1, \cdots,k_j}(I_n-2E_{i_s,i_s}),$$ and thus $\chi_{i_1, \cdots,i_j}\neq \chi_{k_1, \cdots,k_j}$.
This case is proved.

Case 2:  
  $\pi_{r}^{\nu} $
is the symmetric representation or its dual.
For $A=\mbox{diag}(a_1, \cdots, a_n), a_i \in \mathbb{Z}_2$, if $m$ is even, the characters $$\pi^{\mu}(A)(x_1^m )=a_1^{m},\,  \pi^{\mu}(A)(x_2^m )=a_2^{m}, 
$$ are the  same; if $m$ is odd, the characters $$\pi^{\mu}(A)(x_1^m )=a_1^{m}, \pi^{\mu}(A)(x_1x_2^{m-1} )=a_1a_2^{m-1}
$$ are the same when restricted to $\mathbb{Z}_2^n$. The similar claim holds for any $(l_2+l_1,l_2,\dots,l_2)$ and $l_1 \geq 2$ and the dual representation.
So the symmetric tensor powers are excluded.

Now if $(G, K)
=(Mp(n,
\mathbb{R}), 
\widehat{ 
U(n)})
$, then we may choose
a realization for the double cover 
$\widehat{ 
U(n)}$ 
as  $
\widehat
{U(n)}
=\{(g, z); g\in U(n), z^2=\det g \}$ with 
the corresponding $\widehat M
=\{(g, z); g\in M, z^2=\det g\}$. There is then a corresponding double cover $\widehat{U(1)^n}$,
$\widehat M\subset 
\widehat{U(1)^n}
\subset  
\widehat{ 
U(n)}$, and the same arguments as
above prove our claim, 
noticing that one dimensional representations of $\widehat{U(n)}$, $(g, z)\to 
z^{2m+1}$ cannot
be defined on 
$U(n)$.

\end{proof}

\subsection{$(G,K) = (SO^*(2n), U(n))$}
The 
Cartan decomposition
is $ \mathfrak{so}^*(2n) 
=\mathfrak k +\mathfrak p$
with $\mathfrak p$ being
realized as $\mathfrak p
= \{X\in M_{n, n}(\mathbb C), X^t=-X\}$
 as a real space and  
 $K$ acts on
 $\mathfrak p$
 as in the $Sp(n, \mathbb R)$-case,
 $A\in K: X\mapsto AXA^t$.
 We choose the Cartan subalgebra in $\mathfrak p$ as
$$
 \af = \sum\limits_{j=1}^{[\frac{n}{2}]} 
 \R (E_{2j-1,2j} - E_{2j,2j-1}).
 $$
The group $M=C_{K}(\mathfrak{a})$ for  $SO^*(4r)$ is 
$$M=SU(2)^{r}\subset U(2r),$$
and for $SO^*(4r+2)$ it is
$$M=SU(2)^r \times U(1)$$
as  diagonal block matrices.



We now prove Theorem \ref{KresThm} (3).

\begin{proof} 
We shall prove the necessary and sufficient part
in the same time
and recall again that we disregard any factor of characters in the representations of $K$.

Let $n = 2r$ be even.  First we note that the symmetric representations and its dual do split multiplicity-free; if $\pi_{2r}^{\mu}$ is the symmetric representation then
$$S^m(\mathbb{C}^{2r}) \cong \sum_{i_1 + \dots + i_r = m} S^{i_1}(\C^2) \otimes \cdots \otimes S^{i_r}(\C^2), $$
which is multiplicity-free as a $SU(2)^{r}$-representation. The same holds for the dual. 

Case 1:  $r>3$.
By \cite[Corollary 1.2A]{St}
we have
that the only possible candidates 
for  $\mu$ is trivial, the exterior representation, the symmetric representation or its dual 
The symmetric representation and its dual do split multiplicity-free. If $\pi_{2r}^{\mu}$ is the representation on $\bigwedge_{i=1}^n \C^{2r}$ with $n < 2r$  even, then the two representations
$$V_1=\mathbb{C}[(e_1 \wedge e_2) \wedge 
 \cdots   \wedge (e_{n-3} \wedge e_{n-2}) \wedge (e_{n-1} \wedge e_{n})]$$
and
$$V_2=\mathbb{C}[(e_1 \wedge e_2) \wedge  \cdots \wedge  (e_{n-3} \wedge e_{n-2}) \wedge (e_{n+1} \wedge e_{n+2})]$$
are both trivial representations for $SU(2)^r$. 

If $\pi_{2r}^{\mu}$ is  $\bigwedge_{i=1}^n \C^{2r}$ with $n<2r-1$ odd, then the representations generated by
$$(e_1 \wedge e_2) \wedge \cdots \wedge(e_{n-2} \wedge e_{n-1}) \wedge e_{n+1}
$$
and
$$(e_1 \wedge e_2) \wedge \dots \wedge (e_{n+2} \wedge e_{n+3}) \wedge e_{n+1}$$
are isomorphic. We conclude that the exterior representations do not split multiplicity-free.

Case 2:  $r=3$.
The only candidates are $\mu$ is trivial, the exterior representation,  the symmetric representation  and its dual or $2 \omega_3$, $m \omega_2$ or $m \omega_{4}$ \cite[Corollary 1.2A]{St}.

 For $m \omega_2$ and $m \omega_4$,  we see that  $(e_1 \wedge e_2)^{ \otimes m}$ and $(e_3 \wedge e_4)^{\otimes m}$ are in $V_{m \omega_2} \subseteq V_{\omega_2}^{\otimes m}$. These give copies of the trivial representation, so this does not split multiplicity-free. The same holds for $m \omega_4$. For $2 \omega_3$ we get the same representations by $(e_1 \wedge e_2 \wedge e_3)^{\otimes 2}$ or $(e_4 \wedge e_5 \wedge e_6)^{\otimes 2}$.

Case 3:  $r=2$. The only candidates are $\mu= a \omega_1 + b \omega_2$, $\mu=a \omega + b \omega_3$ or $\mu=a \omega_2 + b \omega_3$ \cite[Corollary 1.2A]{St}.

Recall the restriction formula \cite[Equation 9.24]{good}
$$
\mbox{Res}^{\mathrm{GL}(4)}_{\mathrm{GL}(2)\times \mathrm{GL}(2) }(\pi_{4}^{\lambda})=
\oplus_{\mu, \nu}
c_{\mu,\nu}^{\lambda}\pi_2^{\mu}\otimes \pi_2^{\nu},$$
where the sum is
over those $\mu$ and $ \nu$
of lengths at most $2$
and such $\mu\subset \lambda$ that 
that $\nu$ is partition of $|\lambda|-|\mu|$
and  the multiplicities $c_{\mu,\nu}^{\lambda}$ are nonnegetive integers named Littlewood-Richardson coefficients \cite[Section 9.3.5]{good}. A direct calculation  shows that for $V_{(\mu_1,\mu_2,0)} (\mu_2\neq 0)$, $V_{(0,\mu_2,\mu_3)}(\mu_2\neq 0)$ and  $V_{(\mu_1,0,\mu_3)}(\mu_1,\mu_3\neq 0)$ there is multiplicity when restricting to $SL(4,\C)$. This proves the theorem for $n$ even.

Let  $n = 2r+1$
be odd. Then
our claim is a consequence
of 
\cite[Theorem 8.1.2]{good}
by realizing $M$
as a subgroup $M=SU(2)^r \times U(1) \subseteq SU(2r) \times U(1) \subseteq  U(2r+1)$.
\end{proof}

\subsection{
$(G, K) =
(SO_{0}(n,2), SO(n)\times SO(2))
$, 
$(Spin_0(n,2), Spin(n)\times SO(2)/\{(1,1), (-1,-1)\})$
}
The space $\mathfrak p
=M_{n, 2}(\mathbb{R})$
and $K$ acts on 
$\mathfrak p$ by $(A, D):
X\mapsto AXD^{-1}$.
A maximal abelian subalgebra of $\mathfrak p$ is given by
$$\mathfrak{a}=
\mathbb R E_{1,1}+
\mathbb R E_{2,2}.$$
The group $M=C_{K}(\mathfrak{a})$ is
$$M = \{ (A, D)\ | A=\text{diag}(A_1, B), 
A_1=D=\pm I_2, B \in SO(n-2) \} \cong SO(n-2)
\times \mathbb Z_2.$$
For the double cover $Spin(n,2)$ we obtain the subgroups via the covering $Spin(n, 2) \to SO_0(n, 2)$. 
We notice again here that
we can disregard
the character factor and  focus on representations of $Spin(n)$.

The set
$$\Gamma_{2k+1}=\{ \mu | \mu_1 \geq \mu_2 \cdots \geq \mu_k \geq 0; 2\mu_i\in \mathbb{Z}, \mu_i-\mu_j \in \mathbb{Z}\}$$
parametrizes the irreducible representations of $\mathfrak{so}(2k+1,\C)$ and
$$\Gamma_{2k}=\{ \mu | \mu_1 \geq \mu_2 \cdots \geq |\mu_k| ; 2\mu_i\in \mathbb{Z}, \mu_i-\mu_j \in \mathbb{Z}\}$$
 the irreducible representations of $\mathfrak{so}(2k,\C)$. For $\mu_i\in \mathbb Z$ being integers these are representations of $SO(n)$,
the others  are
 representations
of the spin groups; see e.g. \cite[Chapter 19, Chapter 20]{FulHar}. 
We have  the following branching rules \cite[Section 25.3]{FulHar}:
\begin{equation}
\label{res-so-so}
\mbox{Res}_{\mathfrak{so}_{2k}(\C)}^{\mathfrak{so}_{2k+1}(\C)}(\pi_{2k+1}^{\mu})=\bigoplus_{\xi} \pi_{2k}^{\xi},
\end{equation} 
    where the sum is over all $\xi\in\Gamma_{2k} $ such that  $\mu_1 \geq \xi_1 \geq \dots \geq \mu_k \geq |\xi_k|$, and
\begin{equation}
\label{res-so-so2} \mbox{Res}_{\mathfrak{so}_{2k-1}(\C)}^{\mathfrak{so}_{2k}(\C)}(\pi_{2k}^{\xi})=\bigoplus_{\eta} \pi_{2k-1}^{\eta}
\end{equation}
    where the sum is over all $\eta \in \Gamma_{2k-1} $ such that $\xi_1 \geq \eta_1 \geq \dots \geq \eta_{k-1} \geq |\xi_k|$ \cite[Theorem 19.22, Section 25.3]{FulHar}. We now prove Theorem \ref{KresThm} (4).
    
\begin{proof} We first check the $(SO_0(n,2),SO(n) \times SO(2))$ case. As $M=
SO(n-2)
\times \Z_2 
\subset K=SO(n)\times SO(2)$
the multiplicity-free condition
under $M$ implies 
the same for
    $SO(n)$-representations under $SO(n-2)$ using the branching rules in \cite{koike}; this shows that the $\Z_2$ factor always acts in the same way on each subspace.
    
Let  $n=2k+1$ be odd.
    The case $k=1$ is obvious.
   For $k>1$
   it follows
   from  the
above
restriction formula that this is multiplicity-free if and only if $\pi_{2k+1}^\mu
   =\pi_{2k+1}^0   $ is the trivial representation.
Let  $n=2k$ be even. 
It follows again
that the restriction is multiplicity-free if and only if 
   $\mu=(a,\dots,a,\pm a)$
   for some $a\ge 0$.

Now we consider $G=Spin_0(n,2)$, the
connected component 
of the double cover of
$SO_0(n, 2)$. Let $\pi: Spin_0(n, 2)\to SO_0(n, 2)$ be the covering. The maximal compact subgroup 
$K\subset G$ is then $K=
Spin(n)\times SO(2)/\mathbb Z_2$
with $\mathbb Z_2=\{(1, 1), (-1, -1)\}$, $\pm 1$ being
the central elements in $Spin(n)$ and in $SO(2)$, respectively.
We use  the construction
  of $Spin(n)$ as in \cite{FulHar}
 and of their representations along
 with the notation there. First we 
determine  the corresponding subgroup $M=C_K(\mathfrak a)\subset K$. Note that $\pm 1 \in Z(Spin(n))$, the center of $Spin(n)$, where $\{ \pm 1\} = \pi^{-1}(I)$, and $\pi^{-1}(SO(n-2)) \cong Spin(n-2)$. 
 For $Spin(2k)$ let $v_{1} = \frac{e_1 + i e_2}{\sqrt{2}},\dots, v_k = \frac{e_{2k-1} + i e_{2k}}{\sqrt{2}}$ and $v_{n+j} = \overline{v_{j}}$;
 for $Spin(2k+1)$, $v_{2k+1} = e_{2k+1}$. The group ${M}\subset 
Spin(n)\times SO(2)/
\mathbb Z_2$ 
 is generated 
 in $Spin(n)\times SO(2)$, modulo $\mathbb Z_2$,
 by $\omega_1 = \frac{i v_1 \cdot v_{n+1} - i v_{n+1}v_1}{2}$ and the base $B \backslash \{v_1,v_{k+1} \}$ in 
 $CL_n$,
 and the group generated by $B \backslash \{v_1,v_{k+1} \}$ is equal to $Spin(n-2)$.
Here $CL_n$ is the
Clifford algebra of $\mathbb R^n$. The element  $\omega_1$ generates a $\Z_4$ subgroup and commutes with $Spin(n-2)$.
Thus $M=Spin(n-2)\times 
\mathbb Z_4/\mathbb Z_2$.

Now irreducible
presentations
of $K$ define
representations
of $Spin(n)\times SO(2)$ and the 
factor $SO(2)$
acts as scalars,
so we can disregard
this factor.
We prove that
 a 
representation $\tau$ of $Spin(n)$ 
has the multiplicity-free
property under  $M$ if and only if

(1) $\tau=(a,\dots,a,\pm a)$ when $n = 2k$ is even,

(2) $\tau=(\frac{1}{2},\dots,\frac{1}{2})$ when $n = 2k+1$ is odd.

 Let $\tau$ be an irreducible representation of $Spin(n)$
 and
 $$ V_{\tau}|_{Spin(n-2)} = \bigoplus_i V_i$$
its decomposition under $Spin(n-2)$.
We can choose the decomposition so that $\Z_4$ acts as a character on each $V_i$ by the commutativity. We also see that $\omega_1^2 = -1$ is a center element
in $Spin(n)$, and  there are only two options for the character of $\Z_4$ on $V_i$.

Now let $n$ be even and $\tau$ a $Spin(n)$-representation. Let $\omega_j = \frac{i v_j \cdot v_{n+j} - i v_{n+j}v_j}{2}$ and $\omega = \omega_1 \dots \omega_n$ the central element in $Spin(n)$. Then
$$\omega_1 = (-1)^{n-1} \omega \cdot \omega_2 \dots \omega_n.$$
We now observe that $\omega_2 \dots \omega_n$ is the central element in $Spin(2n-2)$ and acts as a character determined by the equivalence class of $V_i$, and $(-1)^{n-1} \omega$ acts as a character determined by $\tau$. Hence the character of $\omega_1$ is uniquely determined by the conjugacy class $\tau_i$ and $\tau$, thus the restriction to $M$ is multiplicity-free if and only if $\tau|_{Spin(2n-2)}$ is multiplicity-free. It follows from the branching rule (\ref{res-so-so}) and (\ref{res-so-so2})
that this is the case
only for $\tau=\pi_{n}^{\mu}$ where $\mu = (a,\dots,a,\pm a)$ for some $a \geq 0$ a half-integer.

Now let $n$ be odd. Let $\tau
=\pi_n^\mu$ be an irreducible representation. Clearly if the multiplicity of a representation in $\tau|_{Spin(n-2)}$ is higher than two the restriction to $M$ is not multiplicity-free because there are only two options for the character of $\mathbb{Z}_4$. It follows from Equations (\ref{res-so-so}) and (\ref{res-so-so2}) that
$$\mu = (1,\dots,1,0,\dots,0)$$
or
$$\mu = (\frac{1}{2},\dots,\frac{1}{2}).$$
From \cite[Theorem 3.2]{koike} we see that $(1,\dots,1,0,\dots,0)$ only splits multiplicity-free for the trivial representation. Now let $\mu = (\frac{1}{2},\dots,\frac{1}{2})$; this is the spin representation given on $V = \bigwedge^{\bullet} W$, where $W$ is spanned by $v_1, \dots, v_n$ \cite[Chapter 20]{FulHar}. We see that $V|_{Spin(n-2)} = V_1 \oplus V_2$ where both are isomorphic to the spin representation. By direct computations we see that $\omega_1$ acts as $i$ on the one and as $-i$ on the other, thus giving two different 
and being free. Hence $\tau|_M$ splits multiplicity-free only for the trivial representation and the spin representation.
\end{proof}

\subsection{$(G,K) = (E_{6(-14)},Spin(10)U(1))$}

The group
$E_{6(-14)}$ 
is defined here as
the real adjoint Lie group of rank $2$ by its action on 
the Lie algebra
$\mathfrak e_{6(-14)}$.
To describe
the subgroup $M$
we use the theory of Jordan triples
\cite{loos} 
to describe
the action of the 
maximal compact 
subgroup $K$
on  $\mathfrak p$
and $\mathfrak p^+$.
The strongly orthogonal
roots $\{\gamma_1, \gamma_2\}$
determine a frame $\{e_1, e_2\}
\subset \mathfrak p^+
$
of minimal tripotents
in $\mathfrak p^+$ 
as root vectors, 
and $\mathfrak a
=\mathbb R\Re e_1
+\mathbb R\Re e_2\subset \mathfrak p
$  is the corresponding Cartan subalgebra.
The root space (also 
called Peirce decomposition in Jordan triples)
of $\mathfrak p^+ $
with respect to the co-roots
$\{H_{\gamma_1}, H_{\gamma_2}\}$
of $\{\gamma_1, \gamma_2\}$
   is (see e.g. \cite{dozha})
   \begin{equation}
   \label{peirce-e6}
   \mathfrak p^+ 
   =(V_{11}
   \oplus 
   V_{12}
   \oplus
      V_{22})
      \oplus 
      (V_{10}
      \oplus 
      V_{20})=
   (\mathbb Ce_1 \oplus 
   \mathbb C^6 \oplus
  \mathbb Ce_2 )
\oplus( \mathbb C^4 \oplus
 \mathbb C^4)
\end{equation} 
with roots of
$(H_{\gamma_1}, H_{\gamma_2})$
being $(2, 0), (1, 1),
(0, 2),  (1, 0),
(1, 0)
$ on the respective spaces.
Let $\mathfrak m_1 = \mathfrak m\cap \mathfrak k'$. Then $\mathfrak m_1^{\C} 
=\mathfrak{spin}(6,\C)
$ is the subalgebra of $\mathfrak k'^{\C}=\mathfrak{spin}(10,\C)$
preserving the decomposition  (\ref{peirce-e6}) above; see \cite[Appendix C]{knapp}. Let 
$ 
H= i(H_{\gamma_1}
+H_{\gamma_2})- 2Z
$,
where $Z$ is the center element of $\mathfrak k$ so that $ad(Z) = i$ on $\mathfrak p^+$.
Then $ad (H) e_1 =ad (H) e_2=0$, so $H\in \mathfrak m$. We have 
$\mathfrak m=
\mathfrak m_1 \oplus \mathbb RH, \mathfrak m_1 
= \mathfrak{spin}(6)
$; see also \cite{knapp}.

We claim that $K=  Spin(10) \times U(1) / \Gamma$ 
as groups acting on $\mathfrak p^+$, where $\Gamma=\mathbb Z_4$ is 
described below in (\ref{K-des}).
The group  $K_1$, the analytic subgroup with Lie algebra $\mathfrak{spin}(10)$, is simply connected and thus equal to $Spin(10)$ \cite[Corollary 3.2]{schlichtkrull}.
Thus the maximal compact subgroup 
$K=  Spin(10) \times U(1) / \Gamma$,
where $U(1) = \{ e^{it Z} \ | t \in \R \} $ 
and $\Gamma = \{ (a,b) \in Spin(10) \times U(1) \ | \ ab = 1 \}$.
 Also, $\p^+$ is the half-spin representation of $\mathfrak{spin}(10)$  \cite{dozha} and $Spin(10,\C)$ acts faithfully on $\p^+$ \cite[Lemma 20.9, Proposition 20.15]{FulHar}. Then so does $Spin(10)$ \cite[Chapter 5.4]{var}. So the only elements of $Spin(10)$ which act as scalars on $\p^+$ are in the center $\mathbb{Z}_4
 =\{1, \omega, -1, -\omega\} 
 $ \cite[Chapter 5.5]{var},  which 
 can further be identified  with $
 \mathbb{Z}_4
 \cong
 \{1,i,-1,-i\}
 \subset Spin(10) $
 by its defining action on $\mathfrak p^+$. It follows by similar arguments as for the proof of \cite[Theorem 3.10.7]{yokata} that
 \begin{equation}
    \label{K-des}
  \Gamma \cong \mathbb Z_4,
 \Gamma= \{(1,1), (i,e^{- \frac{Z}{2} \pi}), (-1,e^{ Z \pi}), (-i,e^{- \frac{3Z}{2} \pi}) \}
 \subset
Spin(10)\times U(1).
\end{equation}
This proves our claim on $K$.

 The Lie group $M$ is connected \cite[Lemma 4.2]{schlichtkrull},  each element $m \in M$ 
 preserves the 
 frame $\{e_1, e_2\}$
 and thus 
 the  Peirce decomposition    (\ref{peirce-e6}), and 
 defines then an action $m|_{V_{10}}
\in U(V_{10}) = U(4)
$
in the 
Peirce component $V_{10}=\mathbb C^4$. We note that $V_{10}$, $V_{20}$ are two non-isomorphic spin representations of $\mathfrak{spin}(6) = \mathfrak{su}(4)$ by \cite{dozha}. As $SU(4)$ acts faithfully and $M_1 = M \cap K_1$ is connected \cite[Lemma 4.4]{schlichtkrull} it follows that $M_1 = M \cap K_1 \cong SU(4)$. Hence 
$$
M = SU(4) U(1) \cong SU(4) \times U(1) / \Gamma',$$
where $\Gamma' = \{ (a,b) \in SU(4) \times U(1) \ | \ a b|_{\p^+} = id \}$ and $U(1) = \{e^{t H} \ | \ t \in \R \}$. $H$ acts as $-i$ on $V_{10}$, $V_{20}$ and as $0$ on $V_{12}$. From
the exact roots 
given in \cite[Section 12]{EHW} it can be calculated that $V_{12}=\mathbb C^6$ is the defining representation of $\mathfrak{so}(6)=\mathfrak{spin}(6)
=\mathfrak{su}(4)$ and has kernel $\{ \pm 1 \} \in SU(4)$. 
Therefore the subgroup $\Gamma'$ is
$
\Gamma' = \{(1,1), (-1, e^{\pi (iD(e,e) - 2Z)} )
\}.
$

We prove Theorem \ref{KresThm} (5).

\begin{proof}The Lie algebra $\mathfrak{m} =
\mathfrak{spin}(6)
\oplus \mathbb RH=\mathfrak{m}_1 \oplus \mathbb{R} H$, and write $H$ in $\mathfrak
k=\mathfrak k'
\oplus \mathbb RZ$ as $H = H_1 + x Z$, $H_1\in 
\mathfrak k'$. We realize $\mathfrak g^{\mathbb C}$
as in  \cite[Section 12]{EHW}. The Lie algebra $\kf^{\C}$ is generated by the roots $\e_1 + \e_2$, $\e_{i+1} - \e_{i}$ $1 \leq i \leq 4$, and is isomorphic to a Lie algebra $\mathfrak{spin}(10,\C)$. We use
the notation
 in \cite{EHW} and 
 choose two
 strongly orthogonal roots
$$\gamma_1 = \frac{1}{2}(\e_1 - \e_2 - \e_3 - \e_4 - \e_5 - \e_6 - \e_7 + \e_8),
$$
$$\gamma_2 = \frac{1}{2}(-\e_1 + \e_2 + \e_3 + \e_4 - \e_5 - \e_6 - \e_7 + \e_8).$$
Then $\mathfrak{m}_1^{\C}$ is the Lie algebra generated by $\e_1 + \e_2$, $- \e_2 + \e_3$ and $-\e_3 + \e_4$. This is isomorphic to a Lie algebra $\mathfrak{spin}(6,\C)$, and corresponds to deleting nodes $1$ and $5$ in the Dynkin diagram $D_5$ of $\mathfrak{spin}(10,\C)$ in \cite{St}. Furthermore
$$- \frac{2}{3} \e_6 - \frac{2}{3} \e_7 + \frac{2}{3} \e_8,$$
is central and acts as $2$ on $\p^+$, so is equal to $-2iZ$. Also, $D(e,e) = h_{\gamma_1} + h_{\gamma_2} = - \e_5 - \e_6 - \e_7 + \e_8$. It follows that $-i H = -\e_5 - \frac{1}{3} \e_6 - \frac{1}{3} \e_7 + \frac{1}{3} \e_8$ and $- i H_1 = - \epsilon_5$. Now according to \cite{St} the only representations which split multiplicity-free when restricted to $\mathfrak{spin}(6,\C) \oplus \C^2 \subseteq \mathfrak{spin}(10,\C)$ are of the form $\{0,m \omega_1, m \omega_2, m \omega_5 \}$. Here $\C^2 = \C( \e_1 - \e_2 - \e_3 - \e_4) \oplus \C \e_5$.

We now investigate the
branching rule
of those $m \omega_i$, $i = 1,2,5$, under $\mathfrak{spin}(6,\C) \oplus \C \e_5$. For $\mu=m\omega_5$ we get the space of harmonic polynomials on $\mathbb C^{10}$. When restricting to $\mathfrak{spin}(6,\C)$ we get several trivial representations and the restriction is not multiplicity-free. 

Now we study $m \omega_1$ and $m \omega_2$. From \cite[Equation 25.34, Equation 25.35]{FulHar} we see
$$V_{m \omega_{1,2}}|_{\mathfrak{spin}(8,\C) \oplus \C \e_5} \cong \bigoplus_{i=0}^{m} V_{i \omega_1 + (m-i) \omega_2} \otimes \C_{j_i},$$
where $\C_{j_i}=\mathbb C$ is a representation of $\C \e_5$. When $m>1$ restricting this to $\mathfrak{spin}(6,\C)$ we get $(\frac{m}{2},\frac{m}{2},\frac{m}{2}-1)$ two times, so it is higher multiplicity. Now we study $\omega_{1,2}$. We follow the notation in \cite[Lecture 18]{FulHar} and \cite[Lecture 20]{FulHar}. Let $W = \langle e_1, \dots, e_5 \rangle \subseteq \C^{10}$ and $W' = \langle e_{6}, \dots, e_{10} \rangle $.
Now the representation labeled by $\omega_1$ is $S^+ = \bigwedge^{\mathrm{even}} W = \bigwedge^{\mathrm{even}} \C^5$. We restrict to $\mathfrak{spin}(8,\C) \oplus \C \e_5$ and call the fundamental weights of $\mathfrak{spin}(8,\C)$ $\theta_1, \dots, \theta_4$. We cut away the node corresponding to $\e_5 - \e_4$ from the Dynkin diagram of $\mathfrak{spin}(10,\C)$ and note that restricting on $S^+$ to $\mathfrak{spin}(8,\C)$ we get
$$ S^+ |_{\mathfrak{spin}(8,\C)} \cong \bigwedge^{\mathrm{even}} \C^5 |_{\mathfrak{spin}(8,\C)} \cong \bigwedge^{\mathrm{even}} \C^4 \oplus (\bigwedge^{\mathrm{odd}} \C^4 \wedge e_5),$$
which are representations of weight $\theta_1$ respectively $\theta_2$. By direct calculation we find
$$ V_{\omega_1} |_{\mathfrak{spin}(8,\C) \oplus \C \e_5} \cong V_{\theta_1} \otimes \C_{-1} \oplus V_{\theta_2} \otimes \C_{1}.$$
From \cite[Equation 25.34]{FulHar} and \cite[Equation 25.35]{FulHar} we obtain that $V_{\omega_1}|_{\mathfrak{spin}(6,\C) \oplus \C \e_5}$ is multiplicity-free, and the same holds for $V_{\omega_2}|_{\mathfrak{spin}(6,\C) \oplus \C \e_5}$. Hence the only possible representations are characters and the two spin representations.

Finally any representation
$\tau$
of $K = Spin(10) \times U(1) / \Gamma$ 
is a representations
 $\tau
=\pi\otimes \chi$ 
of $Spin(10) \times U(1)$ with $\Gamma
\subset \ker
(\pi\otimes \chi)$.
Writing $U(1)$
as $U(1) = \{ e^{t Z} \ | \ t \in [0,2 \pi] \}$ 
and the character $\chi=\chi_n: e^{tZ}
\to e^{itn},
$
the condition 
$\Gamma
\subset \ker
(\pi\otimes \chi)$
forces $n = 1 \mod 4$, or 
$e^{int} \pi(x)$
for $\pi=\pi_{\omega_1} $
and  $n = 3 \mod 4$
for $\pi=\pi_{\omega_2} $.
This finishes the proof.
\end{proof}

\subsection{$(G,K) = (E_{7(-25)}, E_6 U(1))$}
This is similar to the above, the group is chosen
to be the adjoint group
of the real Lie algebra
$\mathfrak e_{7(-25)}$
and its maximal compact
subgroup $K$ is realized
by its adjoint action on the real space $\mathfrak p$
and further as complex linear transformation 
on $\mathfrak p^+=\mathbb C^{27}$.
We claim that  $M=Spin(8)$.

The Lie algebra 
$\mathfrak k^{\C}
         =\mathfrak e_6^{\mathbb C} 
         + \mathbb C Z$ and
         the root system 
         of $
\mathfrak{e}_6^{\mathbb C}$  is given by
           $$\{\e_i\pm \e_j; 5\ge i>j\ge 
1\}\cup
\{\dfrac 12 (\sum_{i=1}^5 
(-1)^{\nu_i} 
\e_i -\e_6-\e_7 +\e_8); 
\text{$\sum_{i=1}^5 {\nu_i} $ is even}\},
$$
where we use the realization from \cite[Section 13]{EHW}. The real rank of $E_{7(-25)}$ is $3$.
   Its normalizer
   is the Lie algebra
   $\mathfrak{spin}(8, \mathbb C)$
   given by the root system
        $\{\e_i\pm \e_j; 4\ge i>j\ge 
1\}$
We directly calculate the Peirce decomposition \cite[Chapter 4]{loos}
   $$
   \mathfrak p^+ =V
   = V_{11} \oplus V_{12} 
   \oplus V_{13} 
   \oplus V_{22} 
\oplus   V_{23} 
   \oplus V_{33}
   =
 \mathbb Ce_1 \oplus 
   \mathbb C^8 \oplus
    \mathbb C^8 \oplus
  \mathbb Ce_2 \oplus     \mathbb C^8 
\oplus \mathbb Ce_3,
   $$
corresponding
   to one defining
   representation
   of $\mathfrak{so}(8, \mathbb C)
   =\mathfrak{spin} (8, \mathbb C)$
   given by the roots
   $$
\{\pm \e_i +\e_6; 1\le i\le 4 \}
$$
and two spin representations given
by the roots
$$ \{
\dfrac 12(
\sum_{i=1}^5 (-1)^{\nu(i)} \e_i
+\e_6-\e_7 +\e_8
); \text{$\sum_{i=1}^5 {\nu(i)}$ is odd}\}.
$$

The Lie algebra $\mathfrak m$
is the subalgebra of $\mathfrak k$
preserving this decomposition and it is
$\mathfrak m=
\mathfrak{spin} (8)
$
with $\mathfrak{spin} (8)$
acting on the 3 copies  $\mathbb C^8$. Moreover, the Jordan algebra is $ \mathfrak{p}^+ = H_3(\mathbb{O}_{\C})$, the Hermitian $3 \times 3$ matrices of bioctonions with respect to the canonical involution \cite[Chapter 4]{loos}. Tripotents are given by $e_i = E_{ii}$, and the $V_{ij}$ are given by $V_{ij} = \{ v E_{ij} + \tilde{v} E_{ji} \ | \ v \in \mathbb{O}_{\C} \}$. They are invariant under $M$ and
\begin{flalign*}
& (m (v) m (w) ) E_{13} + ( m (\tilde{w}) m( \tilde{v} ) ) E_{31}
\\ & = \frac{1}{4} \{m \cdot  (v E_{12} + \tilde{v} E_{21}), e_2, m \cdot (w E_{23} + \tilde{w} E_{32}) \}
\\ & = m \cdot  (v w E_{13} + \tilde{w} \tilde{v} E_{31}),
\end{flalign*}
in terms of triple products and octonion matrices.
Now we see that $\p = H_3(\mathbb{O})$ and there is an action of $M$ on three copies of the real division algebra $\mathbb{O}$, embedded in $V_{12}$, $V_{23}$ and $V_{13}$, such that
$$m(v) m (w) = m(vw).$$
Thus we get a map $M \rightarrow SO(8) \times SO(8) \times SO(8)$ which is injective and a triality. By \cite{desapio} the group of all trialities is equal to $Spin(8)$. Hence $M$ must be equal to $Spin(8)$. 

Now we prove the Theorem \ref{KresThm} (6).

\begin{proof}
As $K=E_6U(1)$ is connected and $M = Spin(8)$ it suffices to study the restriction of
the Lie algebra
representations
and their complexifications.
(More precisely
$K= E_6\times U(1)/\mathbb Z_3$
realized as subgroup
of $E_{7(-25)}$
acting on 
$\mathfrak p^+$
as complex linear transformations, where
$\mathbb Z_3$
is the subgroup 
$
\{(
e^{ik\frac
{2\pi}{3}},
e^{-ik\frac
{2\pi}{3}}),
k=0, 1, 2\}
$, but this fact is
not needed here.)
The subalgebra 
$\mathfrak{m} \subseteq \mathfrak{k}$ and be considered
as a subalgebra
of $\mathfrak{m}^{\mathbb C}
+\mathbb C^2 \subseteq \mathfrak{k}^{\mathbb C}
$ where 
$\mathbb R^2$ is in the Cartan subalgebra of 
$\mathfrak{k}^{\mathbb C}$.
The restriction 
of representations
of
$\mathfrak{k}^{\mathbb C}
$ to this equal rank subalgebra has
been studied in 
 \cite[Corollary 1.2.E6 (vi)]{St}, and 
it follows that the only possible options for which $\tau|_M$ could be multiplicity-free are representations of highest weight $n \omega_1$ or $n \omega_6$ of $E_6$ (we can ignore $SO(2)$); these are dual to each other by the symmetry of the Dynkin diagram of $E_6$, see e.g. \cite{Va}. Hence it suffices to study representations of highest weight $n \omega_1$. By a direct computation from \cite[page 513]{knapp} we see that $\p^{+}$ is the representation of highest weight $\omega_1$. Let the strongly orthogonal roots be $\{\gamma_1, \gamma_2, \gamma_3 \}$, where $\gamma_1$ is the highest and $\gamma_3$ the lowest root. By definition of $M$ as the commutator of $\af = \sum_{j=1}^3 \R(e_j + e_{-j})$ it acts trivially on $e_1, e_3 \in \p^+$. As $e_1$ and $e_3$ are connected to the highest and lowest weight of $\p^+$, both $e_1^{\otimes n}, e_3^{\otimes n} \in V_{n \omega_1} \subseteq V_{\omega_1}^{\otimes n}$. Now when we restrict a representation of highest weight $n \omega_1$ to $\mathfrak{m}$ we get the trivial representation at least twice.
\end{proof}

\section{
Induced representations, 
multiplicities, eigenfunctions
and eigenvalues
of invariant differential operators}
\label{sect-4}
We assume now
 the representations
$\tau$ of $K$
  are those
  classified in the previous section. We prove the uniqueness of  embeddings of finite-dimensional
  representations
  $W_{\lambda}
$ of $G$ in $C^{\infty}(G/K,V_{\tau})$ and their
 factorizations
 via the Poisson-Szeg\"o{} transform.

\subsection{Induced representations, the Poisson-Szeg\H{o} transform
and the realization of
finite-dimensional representations
in induced representatons
and in $C^\infty(G/K,V_{\tau})$}

We recall 
\cite{KW,Ya}
the Poisson-Szeg\H{o} transform 
$$
S_{\sigma, \nu}:
I_{\sigma, \nu}\to
C^\infty(G/K,V_{\tau}),
\, S_{\sigma, \nu}(f)(x) =\frac{\dim\tau}{\dim\sigma}\int_{K}\tau(k)f(xk)dk.
$$
It intertwines the
defining actions
of $G$ on the respective spaces.
We shall also need another
integral formula for $S_{\sigma, \nu}$ \cite{KW},
$$
S_{\sigma, \nu}f(x)
=
\int_K \Psi_{\tau, \nu}(k^{-1}x) f(k)dk
$$
where 
$\Psi_{\nu,\tau}: G \rightarrow \mbox{End }V_{\tau},$ is defined 
by
\begin{equation}
\label{Psi}
\Psi_{\nu,\tau}(x)=e^{(-\nu+\rho_{\mathfrak{a}})(H(x))}\tau(k(x))^{-1}, \ \forall \  x \in G    
\end{equation}
using  the Iwasawa decomposition $x\in G = NAK$,
$$
x=n(x)e^{H(x)}k(x).
$$

We recall also
from Equation (\ref{Jlambda})
the 
representations
of 
a finite-dimensional
$(W_\lambda,
G^{\C})$
in the induced
representations
$
J_\lambda: W_\lambda \to I_{\sigma_{\lambda},- \lambda|_{\mathfrak{a}} - \rho_{\mathfrak{a}}}^{0},
$
where $\sigma_{\lambda} = \sigma$ is the irreducible $M$-submodule of $\overline{N}$-fixed vectors in $W_{\lambda}$. We fix a $K$-invariant
inner product on $W_\lambda$.

\begin{proposition}
\label{finiterepprop}
(1) Let $(W_{\lambda}, G^{\C})$ be a highest weight representation of $G^{\C}$ and its Lie algebra $\mathfrak{g}^{\C}$. For the multiplicities we have $[V_{\tau} : W_{\lambda}|_K] \leq 1$, and if $V_{\tau} \subseteq W_{\lambda}|_K$ we have $[W_{\lambda} : I_{\sigma,-\lambda|_{\af} - \rho_{\af}}^0 ] \leq 1$, viewed as Lie algebra representations.  

(2)
Let $(W_\lambda, G ^{\C})$ be
a finite-dimensional
irreducible representation
such that 
$V_{\tau} \subseteq W_{\lambda}|_K$. Up to  constants there is a unique $G$-equivariant map
$$F : W_{\lambda} \rightarrow C^{\infty}(G/K,V_{\tau})$$
defined by
$$F(w)(g) = P_{\tau} (g^{-1} \cdot w), g\in G, w\in W,$$
where $P_{\tau}$ is the projection onto $V_{\tau}$. Moreover, if we normalize $J_{\lambda}$ so that for all $v \in V_{\tau}$ we have $J_{\lambda}(v) = f_v$, where $f_v(kan) =  a^{\lambda|_{\mathfrak{a}}} P_{\sigma} \tau(k)^{-1} v$,
then there is a factorization
of $F$ 
as a product $G$-equivariant maps,
$$
F=
S_{\sigma, -\lambda|_{\mathfrak{a}} - \rho_{\mathfrak{a}}}
J_{\lambda}: (W_\lambda, G^{\C})\to I_{\sigma, -\lambda|_{\mathfrak{a}} - \rho_{\mathfrak{a}}}^0
\to C^\infty(G/K,V_{\tau}).
$$
\end{proposition}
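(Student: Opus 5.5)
The plan is to reduce every multiplicity statement to Frobenius reciprocity, combined with the multiplicity-freeness of $\tau|_M$ from Theorem \ref{KresThm}.

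\textbf{Part (1), multiplicity of $V_\tau$.} By \eqref{Jlambda}, $W_\lambda$ embeds into $I^0_{\sigma,-\lambda|_{\af}-\rho_{\af}}$ with $\sigma=\sigma_\lambda$. As a $K$-module,
$$I^0_{\sigma,\nu}\cong \oplus_{\tau}\mbox{Hom}_M(\tau|_M,\sigma)\otimes V_\tau .$$
Hence
$$[V_\tau:W_\lambda|_K]\le [V_\tau:I^0_{\sigma,\nu}|_K]=\dim \mbox{Hom}_M(\tau|_M,\sigma)\le 1,$$
and the last inequality holds because $\tau|_M$ is multiplicity-free.

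\textbf{Part (1), multiplicity of $W_\lambda$.} Suppose $V_\tau\subseteq W_\lambda|_K$. Any $\mathfrak g^{\C}$-embedding $W_\lambda\to I^0_{\sigma,\nu}$ restricts to a $K$-embedding of the $V_\tau$-isotypic part. That isotypic component of $I^0_{\sigma,\nu}$ is one-dimensional in multiplicity, so the embedding is determined up to scalar on the copy of $V_\tau$. Since $W_\lambda$ is irreducible, it is generated by $V_\tau$ under $U(\mathfrak g^{\C})$. So two embeddings agreeing on $V_\tau$ agree everywhere, and the space of embeddings has dimension at most one.

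\textbf{Part (2), existence and uniqueness.} Equivariance of $F(w)(g)=P_\tau(g^{-1}w)$ is immediate. The section property holds because $P_\tau$ is $K$-equivariant: $P_\tau(k^{-1}g^{-1}w)=\tau(k)^{-1}P_\tau(g^{-1}w)$.

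For uniqueness, take any $G$-map $F'$. The evaluation $w\mapsto F'(w)(e)$ is a $K$-map $W_\lambda\to V_\tau$, and it determines $F'$ via $F'(w)(g)=F'(g^{-1}w)(e)$. By the first bound in (1), $\mbox{Hom}_K(W_\lambda,V_\tau)$ is at most one-dimensional, so it is spanned by $P_\tau$.

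\textbf{Part (2), factorization.} The composition $S_{\sigma,\nu}J_\lambda$, with $\nu=-\lambda|_{\af}-\rho_{\af}$, is $G$-equivariant into $C^\infty(G/K,V_\tau)$. By uniqueness it is $cF$ for some constant $c$, and it remains to show $c=1$.

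Evaluate at $e$ on $v\in V_\tau$, using $J_\lambda(v)=f_v$ with $f_v(k)=P_\sigma\tau(k)^{-1}v$:
$$S(f_v)(e)=\frac{\dim\tau}{\dim\sigma}\int_K\tau(k)P_\sigma\tau(k)^{-1}v\,dk .$$
By Schur's lemma the integral is a scalar on $V_\tau$. Taking traces gives the scalar as $\dim\sigma/\dim\tau$, so $S(f_v)(e)=v=P_\tau v=F(v)(e)$. Agreement on the nonzero $K$-type $V_\tau$ then forces $c=1$.

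\textbf{Main obstacle.} The difficult step is to justify the normalization $J_\lambda(v)=f_v$. This requires that the restriction of $J_\lambda$ to $V_\tau$ lands in the unique $\tau$-component $\{f_{\tau,T,v}\}$ with $T$ proportional to $P_\sigma|_{V_\tau}$. It also needs $P_\sigma|_{V_\tau}\neq0$, which holds since $V_\tau$ occurs in $I_{\sigma,\nu}$.

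I would obtain this from the multiplicity-one statement: the image of $V_\tau$ must be of the form $f_{\tau,T,\cdot}$, and $\mbox{Hom}_M(\tau|_M,\sigma)$ is one-dimensional. Rescaling $J_\lambda$ then gives the stated form, and the $a^{\lambda|_{\af}}$ factor follows from $-(\nu+\rho_{\af})=\lambda|_{\af}$.
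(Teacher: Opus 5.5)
Your proposal is correct and follows essentially the same route as the paper: Frobenius reciprocity for $I^0_{\sigma,\nu}|_K$ together with multiplicity-freeness of $\tau|_M$ gives (1), evaluation at the base point gives uniqueness of $F$, and the Schur orthogonality computation at $g=e$ shows $S_{\sigma,\nu}J_\lambda = F$. The details you add beyond the paper are all sound: the generation argument for $[W_\lambda : I^0_{\sigma,\nu}]\le 1$, the trace computation of $\int_K \tau(k)P_\sigma\tau(k)^{-1}\,dk$, and the justification that $J_\lambda$ can be rescaled so that $J_\lambda(v)=f_v$ (the paper simply takes this normalization as a hypothesis).
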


\begin{proof}
(1) We have
$
W_{\lambda} 
\subseteq I_{\sigma,-\lambda|_{\mathfrak{a}} - \rho_{\mathfrak{a}}}^0$ and then
$[V_{\tau}: 
W_{\lambda}]\le 
[V_{\tau} : 
I_{\sigma,-\lambda|_{\mathfrak{a}} - \rho_{\mathfrak{a}}}^0|_K ] = [V_{\tau} : Ind_M^K(\sigma)] = [V_{\tau}|_M : \sigma] \leq 1$, by our conditions on $\tau$. When $V_{\tau} \subseteq W_{\lambda}|_K$ the multiplicity $[W_{\lambda} : I_{\sigma,-\lambda|_{\mathfrak{a}} - \rho_{\mathfrak{a}}}^0 ] \leq 1$ as $I_{\sigma,-\lambda|_{\mathfrak{a}} - \rho_{\mathfrak{a}}}^0|_K$ contains $V_{\tau}$ only once.

(2)
The first part is
an elementary fact.
For any $G$-equivariant map $R:
W_\lambda \to 
C^\infty(G/K, V_\tau)
$,
$$
w
\in W_\lambda
\mapsto
R(w) \in 
C^\infty(G/K, V_\tau)
\mapsto R(w)(o) \in V_\tau, o=K\in G/K,
$$
is a $K$-equivariant
map and thus must 
be constant by (1); namely
$Rw$ is uniquely
determined by $Rw(o)$. This
proves that $R$
is given by $F$,
up to a constant.

Now both $F$
and $
S_{\sigma, -\lambda|_{\mathfrak{a}} - \rho_{\mathfrak{a}}}
J_{\lambda}$
are $G$-equivariant
maps, so
they are the same
up to a constant.
We evaluate the map $S_{\sigma, -\lambda|_{\mathfrak{a}} - \rho_{\mathfrak{a}}} J_\lambda(v)
$ at the identity $g=1$ fo $v \in V_{\tau}$,
\begin{equation*}
S_{\sigma, -\lambda|_{\mathfrak{a}} - \rho_{\mathfrak{a}}}J_\lambda(v)
(1)=
\frac{\dim\tau}
{\dim\sigma}
\int_K \tau(k) P_\sigma \tau(k^{-1}) v dk
   =  
   \frac{\dim\tau}
{\dim\sigma}   \frac{\dim\sigma}{\dim\tau} v
=F(v)(1)
\end{equation*}
by the Schur orthogonality relations.
Thus $S_{\sigma, -\lambda|_{\mathfrak{a}} - \rho_{\mathfrak{a}}}J_\lambda
v(g)=F(v)(g)$ for 
any $v \in W_{\lambda}$ and $g\in G$ as claimed. 

 \end{proof}

\begin{remark}
As mentioned in
 the Introduction,
  it follows from 
e.g.  \cite[Prop. 2.2, (40)]{Campo}
that the multiplicity
free result
holds also for 
discrete 
series 
representation
$(\pi, G)$
in 
the space
$L^2(G/K, V_\tau)$
of $L^2$-sections:
if $\pi$ appears 
discretely in
$ L^2(G/K, V_\tau)$,
equivalently
if $\pi$ is a discrete
series and
$[\tau: \pi|_K]\ne 0$, 
then
the multiplicity
$[\pi: L^2(G/K, V_\tau)] \leq 1$. It is therefore
an interesting question
to characterize 
induced representations and discrete series
which contain
the $K$-type $\tau$
classified in Section 3 above.
\end{remark}

 \subsection{Invariance
 of 
 the eigenvalues of differential operators
}

We want to prove invariance properties on the eigenvalues of differential operators. These invariance properties have been used in \cite{SZ} to compute the eigenvalues of Shimura differential operators for one-dimensional representations $\tau$. We first state some facts derived from \cite[Section 3]{L}. Note that from the Iwasawa decomposition for the Lie algebra we have the decomposition 
$$U(\mathfrak g^{\mathbb C}
)
=U(\mathfrak a^{\mathbb C}) U(\mathfrak k^{\mathbb C})\oplus
\mathfrak n^{\mathbb C}
U(\mathfrak g^{\mathbb C}).$$
Let 
$$
\Omega:U( \mathfrak g^{\mathbb C})\rightarrow U( \mathfrak a^{\mathbb C}) U(\mathfrak k^{\mathbb C})
$$
be the correponding
projection. We give $U(\mathfrak a^{\mathbb C}) U(\mathfrak k^{\mathbb C})$ an algebra structure by identifying it with the algebra $U(\mathfrak a^{\mathbb C}) \otimes U(\mathfrak k^{\mathbb C})$ and regard $\Omega$ as a map to $U( \mathfrak a^{\mathbb C}) \otimes U(\mathfrak k^{\mathbb C})$. Then $\Omega(uv)=\Omega(v)\Omega(u), $ for any $u \in U(\mathfrak g^{\mathbb C}), v \in (U(\mathfrak g^{\mathbb C})^{K}$  and $$\Omega(U(\mathfrak g^{\mathbb C})^{M})\subseteq  U( \mathfrak a^{\mathbb C}) \otimes U(\mathfrak k^{\mathbb C})^{M}.$$
Let $T:U(\mathfrak k^{\mathbb C})\rightarrow U(
\mathfrak k^{\mathbb C})$ be the canonical anti-automorphism of $U(\mathfrak k^{\mathbb C})$ defined by :
$$T(1)=1, T(x)=-x, T(xy)=T(y)T(x).$$
Put $$\Omega_{\tau}=(1\otimes \tau) \circ (1 \otimes T)\circ \Omega$$
Then $ \Omega_{\tau}:U(\mathfrak g^{\mathbb C})^{K}\rightarrow 
U(
\mathfrak a^{\mathbb C}
)\otimes \mbox{End}_{M}V_{\tau}$  is an algebra homomorphism.
Any $\lambda \in \mathfrak (a^{\mathbb C})'$
can be  extended to an evaluation on 
$
U(\mathfrak a^{\mathbb C})$, which we also  denote  by $\lambda$.
Let
$$\Omega_{\tau, \lambda}=(\lambda\otimes 1)\circ \Omega_{\tau}:U(\mathfrak g^{\mathbb C})^{K}\rightarrow  \mbox{End}_{M}V_{\tau}.$$

The following lemma follows
by direct
computations.

\begin{lemma}
\label{egvalue}
For $v \in V_{\tau}$ let $f_{v, \lambda}\in C^\infty(G/K, V_\tau)$
be defined by $f(x) = \Psi_{\tau, \lambda}(x) v, x\in G$,
where
$\Psi_{\tau, \lambda}
$ is defined in (\ref{Psi}). For any $D \in U(\mathfrak{g}^{\C})^{K}
\subset
(
U(\mathfrak{g}^{\C}\otimes End(V_\tau))^{K}
$
realized 
as  differential
operator on $C^\infty(G/K, V_\tau)$
via $\nabla(D)$ in
(\ref{diffopUnAlg})
we have $$(\nabla(D)f_{v, \lambda})(x)= \Psi_{\tau, \lambda}(x) \Omega_{
\tau, -\lambda+\rho_{\mathfrak{a}}
}(D)v, \ \forall \ x \in G.$$
 \end{lemma}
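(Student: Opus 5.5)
The plan is to reduce the identity to an evaluation at the identity element, using the transformation properties of $\Psi_{\tau,\lambda}$ and the invariance of $\nabla(D)$. There are three steps.

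\textbf{Step 1: transformation properties.} From (\ref{Psi}) and the Iwasawa decomposition $x=n(x)e^{H(x)}k(x)$, for $n\in N$, $a=e^{H}\in A$ and $k\in K$ we have
$$\Psi_{\tau,\lambda}(n a x k)=e^{(-\lambda+\rho_{\mathfrak a})(H)}\,\Psi_{\tau,\lambda}(x)\,\tau(k)^{-1}.$$
Hence $f=f_{v,\lambda}$ lies in $C^\infty(G/K,V_\tau)$. It is left $N$-invariant and transforms by the character $e^{(-\lambda+\rho_{\mathfrak a})(H)}$ under left translation by $A$.

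\textbf{Step 2: reduction to the identity.} The operator $\nabla(D)$ is built from right derivatives, so it commutes with left translations by $G$. Since $D$ is $K$-invariant, $\nabla(D)$ preserves the right $K$-equivariance condition $f(gk)=\tau(k)^{-1}f(g)$. Therefore, for $x=nak$,
$$(\nabla(D)f)(x)=e^{(-\lambda+\rho_{\mathfrak a})(H)}\,\tau(k)^{-1}\,(\nabla(D)f)(1)=\Psi_{\tau,\lambda}(x)\,(\nabla(D)f)(1),$$
and it suffices to prove that $(\nabla(D)f)(1)=\Omega_{\tau,-\lambda+\rho_{\mathfrak a}}(D)v$.

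\textbf{Step 3: evaluation at $1$.} Write $D=\Omega(D)+u$ with $u\in\mathfrak n^{\mathbb C}U(\mathfrak g^{\mathbb C})$.

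For a monomial $u=Xw$ with $X\in\mathfrak n$, the right derivative at $1$ is obtained by differentiating $f(\exp(sX)\exp(t_1Y_1)\cdots)$. Because $f$ is left $N$-invariant, the $s$-derivative of $f(\exp(sX)\,y)=f(y)$ vanishes, so $(\nabla_u f)(1)=0$.

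For a monomial $h\,y$ with $h\in U(\mathfrak a^{\mathbb C})$ and $y\in U(\mathfrak k^{\mathbb C})$, consider
$$f(\exp(s_1H_1)\cdots\exp(t_1Y_1)\cdots)=e^{(-\lambda+\rho_{\mathfrak a})(\sum s_iH_i)}\,\tau(\exp t_1Y_1\cdots)^{-1}v.$$
Differentiating, the $\mathfrak a$-factors produce the evaluation $(-\lambda+\rho_{\mathfrak a})(h)$. The $\mathfrak k$-factors produce $\tau$ applied to the reversed product with signs, that is, $\tau(T(y))v$, because of the inverse $\tau(\cdot)^{-1}$. This is exactly $\bigl((-\lambda+\rho_{\mathfrak a})\otimes\tau\bigr)\circ(1\otimes T)\,(h\otimes y)\,v$, and summing over monomials gives $\Omega_{\tau,-\lambda+\rho_{\mathfrak a}}(D)v$.

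\textbf{Main obstacle.} The delicate point is the bookkeeping of orders and signs. One has to match the anti-homomorphism convention for $\nabla$, the ordering $U(\mathfrak a^{\mathbb C})U(\mathfrak k^{\mathbb C})$ used to define $\Omega$, and the inverse in $\tau(k(x))^{-1}$, so that precisely $T$, and not the identity, appears. I would check this first on degree-one and degree-two monomials $X$, $HY$, $Y_1Y_2$. I would also note that the $\mathfrak a$- and $\mathfrak k$-parts commute as operators at $1$ because the corresponding factors are separated in the product $\exp(\cdot)\cdots\exp(\cdot)$.
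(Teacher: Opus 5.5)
Your proposal is correct and is exactly the ``direct computation'' the paper leaves to the reader. Equivariance under left $NA$-translations and right $K$-translations reduces the claim to $x=1$, left $N$-invariance kills $\mathfrak n^{\mathbb C}U(\mathfrak g^{\mathbb C})$, and the $A$- and $K$-factors produce $(-\lambda+\rho_{\mathfrak a})(h)\,\tau(T(y))$. On the convention you flag: your Step 3 uses the multiplicative extension $\nabla_{Z_1\cdots Z_n}f(1)=\partial_{t_1}\cdots\partial_{t_n}f(\exp(t_1Z_1)\cdots\exp(t_nZ_n))|_{t=0}$, which is the right one because right derivatives satisfy $[\nabla_X,\nabla_Y]=\nabla_{[X,Y]}$, notwithstanding the paper's remark calling $\nabla$ an anti-homomorphism; this is also precisely the convention compatible with the decomposition $U(\mathfrak a^{\mathbb C})U(\mathfrak k^{\mathbb C})\oplus\mathfrak n^{\mathbb C}U(\mathfrak g^{\mathbb C})$ used to define $\Omega$.
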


Recall the Weyl group $W=M'/M$. There is a natural action of $W$ on the dual $\widehat{M}$ 
of all irreducible representations of $M$ as follows. Let $\gamma\in \hat{M}$
and $s\in W=M'/M$ be represented by $\hat {s}\in M'$, then the representation $s. \gamma$ is
 $$
 (s. \gamma)(m)=\gamma (\hat {s}m \hat{s}^{-1}), \quad  m\in M.
$$
We state the main theorem. 

\begin{theorem}
\label{egvaluethm}
Assume the irreducible finite-dimensional  representation $V_\tau$  
of $K$ is  multiplicity-free  when  restricted to  $M$.

(1) For any $M$-irreducible subrepresentation $(\sigma,U_\sigma)$ in $\tau|_M$,  $f \in I_{\sigma,\nu}^0$ and $D \in \mathcal{D}^G(G/K, V_{\tau})$, we have $DS_{\sigma, \nu}(f) = \omega(D)(\sigma,\nu + \rho_{\mathfrak{a}})S_{\sigma, \nu}(f)$ for a constant $\omega(D)(\sigma,\nu + \rho_{\mathfrak{a}})$.
Furthermore, we have the invariance property:
$$\omega(D)(\sigma,\nu + \rho_{\mathfrak{a}}) = \omega(D)(s \cdot \sigma,s \cdot \nu + \rho_{\mathfrak{a}}), \ \forall \ s\in W.$$

(2) Let $(W_\lambda, G^{\C})$
be a  finite-dimensional
representation of $
G^{\C}$ containing $V_\tau$.  Let $D \in \mathcal{D}^G(G/K,V_{\tau})$, then $D$ acts on 
$F(W_\lambda)$
as the constant
$$
DF(w) = \omega(D)(\sigma_{\lambda},-\lambda|_{\mathfrak{a}})F(w), 
\quad  \ \forall \ w \in W_\lambda.
$$
\end{theorem}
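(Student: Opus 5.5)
The plan is to reduce everything to the kernel $\Psi_{\tau,\nu}$ via the integral formula $S_{\sigma,\nu}f(x)=\int_K\Psi_{\tau,\nu}(k^{-1}x)f(k)\,dk$, use Lemma \ref{egvalue} to compute the action of $D$ on $\Psi$, and then invoke Lepowsky's results \cite[Section 3]{L} for the Weyl group invariance. Concretely, write $D=\nabla(u)$ with $u\in(U(\mathfrak g^{\C})\otimes \mbox{End}V_\tau)^K$ using the surjectivity of (\ref{diffopUnAlg}). Since $D$ commutes with left translations, it acts on $x\mapsto\Psi_{\tau,\nu}(k^{-1}x)$ through the $x$-variable. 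Lemma \ref{egvalue} (stated there for $U(\mathfrak g^{\C})^K$, and extended in the same way to $K$-invariants with $\mbox{End}V_\tau$ coefficients, where $\Omega_\tau$ lands in $U(\mathfrak a^{\C})\otimes\mbox{End}_M V_\tau$) then gives
\[
D S_{\sigma,\nu}f(x)=\int_K\Psi_{\tau,\nu}(k^{-1}x)\,\Omega_{\tau,-\nu+\rho_{\mathfrak a}}(u)f(k)\,dk .
\]
Here $f(k)\in U_\sigma\subset V_\tau$, since $\sigma$ is regarded as an $M$-submodule of $\tau|_M$, and $\Omega_{\tau,\cdot}(u)\in\mbox{End}_M V_\tau$. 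Because $\tau|_M$ is multiplicity-free, Schur's lemma makes $\mbox{End}_M V_\tau$ commutative and diagonal, so $\Omega_{\tau,-\nu+\rho_{\mathfrak a}}(u)$ acts on $U_\sigma$ by a scalar. That scalar is defined to be $\omega(D)(\sigma,\nu+\rho_{\mathfrak a})$, up to the sign convention, which I will fix by matching with part (2). It is polynomial in $\nu$, and this proves the eigen-equation in (1).

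For the invariance I would use the fact that the map $u\mapsto(\Omega_\tau(u)|_{U_\sigma})$ is an algebra homomorphism from $U(\mathfrak g^{\C})^K$ into $U(\mathfrak a^{\C})\otimes\mbox{End}_MV_\tau$. Lepowsky \cite[Section 3]{L} identifies its image, after the $\rho_{\mathfrak a}$-shift, with elements invariant under the natural $W=M'/M$ action: $M'$ acts on $\mathfrak a$ and on $\mbox{End}_MV_\tau$ by conjugation via $\tau(\hat s)$, and $\tau(\hat s)$ maps the $\sigma$-isotypic piece to the $s\cdot\sigma$-isotypic piece. Translating this gives $\omega(D)(\sigma,\kappa+\rho_{\mathfrak a})=\omega(D)(s\cdot\sigma,s\kappa+\rho_{\mathfrak a})$. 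As an alternative and check, I would use the standard intertwining operators $A(s):I_{\sigma,\nu}\to I_{s\sigma,s\nu}$ and the identity $S_{s\sigma,s\nu}\circ A(s)=c(s,\nu)S_{\sigma,\nu}$. For generic $\nu$ both sides are nonzero, so comparing eigenvalues of $D$ gives the identity on a Zariski-dense set, and polynomiality extends it to all $\nu$. The main obstacle is this step: either matching Lepowsky's normalization (the $\rho$-shift and the sign from $T$) or showing that $c(s,\nu)\neq0$ generically. I would first try the intertwining-operator route, reducing to simple reflections.

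For (2), Proposition \ref{finiterepprop} gives $F=S_{\sigma_\lambda,-\lambda|_{\mathfrak a}-\rho_{\mathfrak a}}J_\lambda$. Hence every $F(w)$ lies in the image of $S_{\sigma_\lambda,\nu}$ with $\nu=-\lambda|_{\mathfrak a}-\rho_{\mathfrak a}$, and part (1) yields $DF(w)=\omega(D)(\sigma_\lambda,-\lambda|_{\mathfrak a})F(w)$. If $\sigma_\lambda\not\subset\tau|_M$, the map $F$ vanishes by the Schur computation in Proposition \ref{finiterepprop}, so nothing needs to be proved in that case.
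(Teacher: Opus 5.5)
Apart from the Weyl-group invariance, your argument is the paper's. For the eigen-equation in (1) you apply Lemma \ref{egvalue} to the kernel $\Psi_{\tau,\nu}(k^{-1}x)$ and use Schur's lemma on $U_\sigma$ from multiplicity-freeness; for (2) you use $F=S_{\sigma_\lambda,-\lambda|_{\mathfrak a}-\rho_{\mathfrak a}}J_\lambda$. The paper avoids your extension of Lemma \ref{egvalue} to $\mbox{End}V_\tau$-coefficients by taking a representative of $D$ in $U(\mathfrak g^{\C})^K$, where the lemma applies verbatim. For the invariance the paper only cites \cite[Theorem 9.8(2)]{L}, which is your first route; the normalization bookkeeping you worry about is absorbed into that citation.

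Your preferred intertwining-operator route is genuinely different, and it closes without computing $c(s,\nu)$ or reducing to simple reflections:
\begin{enumerate}
\item Frobenius reciprocity and $[\tau|_M:\sigma]=1$ make the space of $G$-maps $I_{\sigma,\nu}\to C^\infty(G/K,V_\tau)$ at most one-dimensional. Also $s\sigma\subset\tau|_M$, because $\tau(\hat s)$ carries $U_\sigma$ to an $M$-submodule of that type.
\item The Schur-orthogonality computation in the proof of Proposition \ref{finiterepprop} gives $S_{\sigma',\nu'}(f_{\tau,P_{\sigma'},v})(1)=v$. Hence $S_{\sigma',\nu'}\neq0$ for every $\nu'$ and every $\sigma'\subset\tau|_M$.
\item For generic $\nu$, $A(s)$ is holomorphic and an isomorphism, by generic irreducibility of $I_{\sigma,\nu}$. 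So $S_{s\sigma,s\nu}\circ A(s)\neq 0$, and therefore $c(s,\nu)\neq0$.
\item Polynomiality of $\nu\mapsto\Omega_{\tau,-\nu+\rho_{\mathfrak a}}(D)|_{U_\sigma}$ extends the identity to all $\nu$.
\end{enumerate}

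This route makes the role of multiplicity one explicit and fixes the conventions automatically. The Knapp--Stein operator built from $\hat s$ maps $I_{\sigma,\nu}$ to $I_{\sigma',\nu\circ\mathrm{Ad}(\hat s)^{-1}}$ with $\sigma'(m)=\sigma(\hat s^{-1}m\hat s)$. The paper's $(s.\gamma)(m)=\gamma(\hat s m\hat s^{-1})$ is the action of $s^{-1}$ in this normalization, so the same Weyl element must act on both arguments and $s\cdot\kappa$ has to be read compatibly. The cost is analytic input (meromorphic continuation of $A(s)$, generic irreducibility of minimal principal series) that Lepowsky's algebraic result avoids.

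One slip in (2): the case $\sigma_\lambda\not\subset\tau|_M$ does not make $F$ vanish; it cannot occur. First, $F(v)(1)=P_\tau v=v\neq0$ for $v\in V_\tau$, so $F$ is never zero. Second, $V_\tau\subset W_\lambda\hookrightarrow I_{\sigma_\lambda,-\lambda|_{\mathfrak a}-\rho_{\mathfrak a}}$ and $I_{\sigma_\lambda,\cdot}|_K\cong\mathrm{Ind}_M^K\sigma_\lambda$. Frobenius reciprocity then gives $[\tau|_M:\sigma_\lambda]\ge1$, which is exactly the hypothesis part (1) needs.
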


\begin{proof}(1) Recall that any invariant differential operator $D \in \mathcal{D}^G(G/K, V_{\tau})$ can be represented by an element of $U(\mathfrak{g}^{\mathbb C})^K$ via
the natural
map (\ref{diffopUnAlg});
see also \cite[Proposition 2.1]{Sh},
and 
$\Omega_{\nu,\tau}(D)$ 
is well-defined using
the representative in $U(\mathfrak{g}^{\C})$.
As $\tau|_M$ is multiplicity-free and $\Omega_{-\nu + \rho_{\mathfrak{a}}}(D)|_{U_{\sigma}} \in \mbox{End}_{M}(U_\sigma) = \C I_{U_{\sigma}}$,  by Lemma \ref{egvalue},
we have $D.S_{\sigma, \nu}(f) =\Omega_{-\nu + \rho_{\mathfrak{a}},U_{\sigma}}(D)S_{\sigma, \nu}(f) = \omega(D)(\sigma, \nu + \rho_{\mathfrak{a}}) S_{\sigma,\nu}(f)$.
The invariance property follows directly from
\cite[Theorem 9.8(2)]{L}.

(2) The   follows from (1), since $F = S_{\sigma,-\lambda|_{\mathfrak{a}} - \rho_{\mathfrak{a}}} J_{\lambda}$ by Proposition 4.1. 
\end{proof}


\end{document}